\RequirePackage{fix-cm}
\RequirePackage{amsmath}
\RequirePackage{amssymb}

\documentclass[12pt]{article}
\usepackage{mathrsfs}
\usepackage{bm}
\usepackage{amsthm}
%  \numberwithin{equation}{section}
%\usepackage[colorlinks,linkcolor=blue,anchorcolor=red,citecolor=blue]{hyperref}
\usepackage[colorlinks,linkcolor=blue,anchorcolor=red,citecolor=blue]{hyperref}
 \usepackage{geometry}
  \geometry{left=2cm, right=2cm, top=3.2cm, bottom=2.8cm}
%\theoremstyle{plain}
%\usepackage{authblk}
%\smartqed

\newtheorem{Thm}{Theorem}[section]

\newtheorem{Lem}[Thm]{Lemma}
\newtheorem{Coro}[Thm]{Corollary}

%\numberwithin{equation}{section}
%\spnewtheorem{Thm}{Theorem}[section]{\bf}{\it}
%\spnewtheorem{Def}[Thm]{Definition}{\bf}{\it}
%\spnewtheorem{Lem}[Thm]{Lemma}{\bf}{\it}
%\spnewtheorem{Coro}[Thm]{Corollary}{\bf}{\it}
%\spnewtheorem{Rem}[Thm]{Remark}{\bf}{\it}
%\spnewtheorem{Prop}[Thm]{Proposition}{\bf}{\it}
%\spnewtheorem*{proof}{Theorem}{\it}{\rm}

\newcommand{\1}{\mathbf{1}}
\newcommand{\R}{\mathbb{R}}
\newcommand{\Rd}{{\mathbb{R}^d}}

\renewcommand{\Re}{\text{Re}}

\newcommand{\F}{\mathscr{F}}
\newcommand{\N}{\mathbb{N}}

\renewcommand{\L}{\mathscr{L}}

\usepackage{cite}

\allowdisplaybreaks

\renewcommand{\Re}{\text{Re}}

\renewcommand{\S}{\mathscr{S}}

\newcommand{\<}{\langle}
\renewcommand{\>}{\rangle}
\newcommand{\T}{\mathcal{T}}

\renewcommand{\H}{H(a^{1/2})}

\newcommand{\vertiii}[1]{{\left\vert\kern-0.25ex\left\vert\kern-0.25ex\left\vert #1 \right\vert\kern-0.25ex\right\vert\kern-0.25ex\right\vert}}

\title{Regularity of Non-cutoff Boltzmann Equation with Hard Potential}
%\author[1]{Dingqun DENG\corref{cor1}
%\fnref{fn1}}
%\ead{dingqdeng2-c@my.cityu.edu.hk}
%\address[1]{Department of Mathematics, City University of Hong Kong, Hong Kong, P.R. China}
%\fntext[fn1]{ORCID: 0000-0001-9678-314X\\ Email address: dingqdeng2-c@my.cityu.edu.hk}
\author{Dingqun DENG \thanks{email: dingqdeng2-c@my.cityu.edu.hk, Department of Mathematics, City University of Hong Kong, ORCID: 0000-0001-9678-314X } }
%\institute{Dingqun Deng \at Department of Mathematics, City University of Hong Kong, Hong Kong, P.R. China\\\email{dingqdeng2-c@my.cityu.edu.hk}\\
%ORCID: 0000-0001-9678-314X.}

%\date{Received: date / Accepted: date}
%
%
%\titlerunning{Global Existence of Boltzmann Equation}
%\authorrunning{Dingqun DENG}

\begin{document}

\maketitle

\begin{abstract}
	This article proves the regularity for the Boltzmann equation without angular cutoff with hard potential. By sharpening the coercivity and upper bound estimate on the collision operator, analyzing the Poisson bracket between the transport operator and some weighted pseudo-differential operator, we prove the regularizing effect in space and velocity variables when the initial data has mild regularity. 
	
	{\bf Keywords:} Regularity, Smoothing effect, pseudo-differential calculus, Boltzmann equation without cutoff.
%	\begin{keyword}Regularity \sep Smoothing effect \sep pseudo-differential calculus \sep Boltzmann equation without cutoff
%		\end{keyword}
%	\keywords{ Global existence \and pseudo-differential calculus \and Boltzmann equation without cutoff \and regularizing effect \and hypoelliptic estimate \and spectrum analysis \and one-parameter semigroup.}
	
	\paragraph{Mathematics Subject Classification (2020)}
	%\subclass{
%	35Q20  	Boltzmann equations [see also 76P05  82B40  82C40 82D05], 
%35B65  	Smoothness and regularity of solutions to PDEs, 
%35D35  	Strong solutions to PDEs,
%	76P05  	Rarefied gas flows, Boltzmann equation in fluid mechanics [See also 82B40, 82C40, 82D05].
	35Q20  	Boltzmann equations, 
35B65  	Smoothness and regularity of solutions to PDEs, 
35D35  	Strong solutions to PDEs,
76P05  	Rarefied gas flows, Boltzmann equation in fluid mechanics.
	%\subclass{
	%35Q20 \and 
	%35P15 \and 35S05 \and 35H10 \and
	%47D06.}
\end{abstract}

% \begin{keywords}
% 	Boltzmann equation, linearized collision operator, pseudo-differential operator, dissipation, strongly continuous semigroup.
% \end{keywords}
% \begin{AMS}
% 	35S05, 47D03, 47D06.
% \end{AMS}
%\paragraph{Declarations}
%
%Funding: The author is supported by Hong Kong PhD Fellowship Scheme. 
%
%Conflicts of interest/Competing interests: I declare that there are no conflicts of interest.
%
%Availability of data and material: Not applicable.
%
%Code availability: Not applicable.

\tableofcontents

\section{Introduction}

In this paper, we consider the regularity of Boltzmann equation in $d$-dimension, which describes the dynamics of dilute gas:
\begin{align}\label{eq1}
	F_t + v\cdot\nabla_x F = Q(F,F),
\end{align} 
where the unknown $F(x,v,t):\Rd\times\Rd\times[0,\infty)\to[0,\infty)$ represents the density of particles at time $t$, position $x\in\Rd$ and velocity $v\in\Rd$ with $d\ge 3$. The Boltzmann collision operator $Q(F,G)$ is a bilinear operator, describing the particle interactions, defined for sufficiently smooth functions $F,G$ by
\begin{align*}
	Q(F,G)(v) := \int_{\Rd}\int_{S^{d-1}}B(v-v_*,\sigma) (F'_*G' - F_*G) \, d\sigma dv_*,
\end{align*}
where $F' = F(x,v',t)$, $G'_* = G(x,v'_*,t)$, $F = F(x,v,t)$, $G_* =G(x,v_*,t)$,
and $(v,v_*)$ are the velocities of two gas particles before collision while
$(v',v'_*)$ are the velocities after collision satisfying the following conservation laws of momentum and energy,
\begin{align*}
	v+v_*=v'+v'_*,\ \ |v|^2+|v_*|^2=|v'|^2+|v'_*|^2.
\end{align*}As a consequence, for $\sigma\in \mathbb{S}^{d-1}$, the unit sphere in $\R^d$, we have the $\sigma$-representation:
\begin{align*}
	v' = \frac{v+v_*}{2}+\frac{|v-v_*|}{2}\sigma,\ \
	v'_* = \frac{v+v_*}{2}-\frac{|v-v_*|}{2}\sigma.
\end{align*}
Also we define the angle $\theta$ in the standard way
\begin{align*}
	\cos\theta = \frac{v-v_*}{|v-v_*|}\cdot \sigma,
\end{align*}where $\cdot$ denotes the inner product in $\R^d$.
\paragraph{Collision Kernel} The collision kernel cross section $B$ is defined as 
\begin{align*}
	B(v-v_*,\sigma) = |v-v_*|^\gamma b(\cos\theta),
\end{align*}
for some $\gamma\in\R$ and function $b$. Without loss of generality, we can assume $B(v-v_*,\sigma)$ is supported on $(v-v_*)\cdot\sigma\ge 0$, which corresponds to $\theta\in[0,\pi/2]$, since $B$ can be replaced by its symmetrized form $\overline{B}(v-v_*,\sigma) = B(v-v_*,\sigma)+B(v-v_*,-\sigma)$ in $Q(f,f)$.
Moreover, we are going to work on the collision kernel without angular cutoff, which corresponds to the case of inverse power interaction laws between particles. That is,
\begin{align}
	b(\cos\theta)\approx \theta^{-d+1-2s}\ \text{ on }\theta \in (0,\pi/2),
\end{align}
and
\begin{align}
	s\in (0,1), \quad \gamma\in (-d,\infty).
\end{align}For the Boltzmann equation without angular cutoff, the condition $\gamma+2s< 0$ is called soft potential while $\gamma+2s>0$ is called hard potential. For mathematical theory of Boltzmann equation, one may refer to \cite{Alexandre2001,Alexandre2009,Cercignani1994,Villani2002} for more introduction.
Our regularity results are restricted to the range of parameters $\gamma+2s>0$. 

\subsection{Preliminary Result}
In this paper, we write $L^2$ space to be the space on $x,v$, i.e. $L^2 := L^2(\R^d_x\times\R^d_v)$. Also we will denote the Sobolev norms for $k,m\in\R$:
\begin{align*}
	\|f\|_{H^k_vH^m_x}:&=\|\<D_x\>^m\<D_v\>^kf\|_{L^2_{x,v}},
\end{align*}where $\<D_x\>^mf=\F^{-1}_x(\<y\>^m\F_x f(y))$, $\<D_v\>^kf=\F^{-1}_v(\<\eta\>^m\F_v f(\eta))$.

We would like to apply the symbolic calculus in \cite{Global2019,Deng2020} for our study as the following. 
One may refer to the appendix as well as \cite{Lerner2010} for more information about pseudo-differential calculus. Let $\Gamma=|dv|^2+|d\eta|^2$ be an admissible metric. 
Define
\begin{align}\label{Defofa}
	a(v,\eta):=\<v\>^\gamma(1+|\eta|^2+|\eta\wedge v|^2+|v|^2)^s+K_0\<v\>^{\gamma+2s}
\end{align}to be a $\Gamma$-admissible weight, where $K_0>0$ is chosen as following and $|\eta\wedge v|=|\eta||v|\sin\theta_0$ with $\theta_0$ being the angle between $\eta$, $v$. 
Applying theorem 4.2 in \cite{Global2019} and lemma 2.1 and 2.2 in \cite{Deng2020a}, there exists $K_0>0$ such that the Weyl quantization $a^w:H(ac)\to H(c)$ and $(a^{1/2})^w:H(a^{1/2}c)\to H(c)$ are invertible, with $c$ being any $\Gamma$-admissible metric. The weighted Sobolev space $H(c)$ is defined by \eqref{sobolev_space}. The symbol $a$ is real and gives the formal self-adjointness of Weyl quantization $a^w$. By the invertibility of $(a^{1/2})^w$, we have equivalence 
\begin{align*}
	\|(a^{1/2})^w(\cdot)\|_{L^2_v}\approx\|\cdot\|_{\H_v},
\end{align*}as in appendix and hence we will equip $\H_v$ with norm $\|(a^{1/2})^w(\cdot)\|_{L^2_v}$.

We will study the Boltzmann equation \eqref{eq1} near the global Maxwellian equilibrium
\begin{align*}
	\mu(v)=(2\pi)^{-d/2}e^{-|v|^2/2}.
\end{align*}
So we set $F = \mu + \mu^{\frac{1}{2}}f$ and then the Boltzmann equation \eqref{eq1} becomes
\begin{align*}
	f_t + v\cdot\nabla_x f = Lf + \mu^{-1/2}Q(\mu^{1/2}f,\mu^{1/2}f),
\end{align*}
where $L$ is called the linearized Boltzmann operator given by
\begin{align*}
	Lf = \mu^{-1/2}Q(\mu,\mu^{1/2}f) + \mu^{-1/2}Q(\mu^{1/2}f,\mu) = L_1f+L_2f,
\end{align*}
where $L_1,L_2$ are defined as the following, by applying \eqref{Carleman},
\begin{align}
	L_1f &= \lim_{\varepsilon\to 0}\int_{\R^d,|h|\ge\varepsilon}\,dh\int_{E_{0,h}}\,d\alpha\,\tilde{b}(\alpha,h)\1_{|\alpha|\ge|h|}\frac{|\alpha+h|^{\gamma+1+2s}}{|h|^{d+2s}}\mu^{1/2}(v+\alpha-h)\notag\\
	&\qquad\qquad\Big((\mu^{1/2}(v+\alpha)f(v-h)-\mu^{1/2}(v+\alpha-h)f(v)\Big),\label{L1}\\
	L_2f &= \lim_{\varepsilon\to 0} \int_{\R^d,|h|\ge\varepsilon}\,dh\int_{E_{0,h}}\,d\alpha\,\tilde{b}(\alpha,h)\1_{|\alpha|\ge|h|}\frac{|\alpha+h|^{\gamma+1+2s}}{|h|^{d+2s}}\mu^{1/2}(v+\alpha-h)\notag\\
	\label{L2}	&\qquad\qquad\Big(\mu^{1/2}(v-h)f(v+\alpha)-\mu^{1/2}(v)f(v+\alpha-h)\Big),
\end{align}
which are well-defined for Schwartz function according to \cite{Global2019,Deng2020a}.
Here we use the principal value on $h$ in order to assure the integral are well-defined when the two terms in the parentheses is separated into two integral, where change of variable can be applied.
By section 3 in \cite{Deng2020a}, $L=L_1+L_2$ can be regarded as the standard pseudo-differential operator with symbols in $S(a)$.
Then by the unique extension of continuous operator, $L$ is a linear continuous operator from $H(ac)$ into $H(c)$ for any $\Gamma$-admissible weight function $c$. Also we define for multi-index $\alpha$ that 
\begin{equation*}
	\T(g,f,\partial^\alpha_v(\mu^{1/2})) = \int_{\Rd}dv_*\int_{S^{d-1}}d\sigma B(v-v_*,\sigma)\partial^\alpha_v(\mu^{1/2})(v_*)(g'_*f'-g_*f), \end{equation*}\begin{equation*}
	\Gamma(g,f) = \T(g,f,\mu).
\end{equation*}Then $\Gamma(f,f) = \mu^{-1/2}Q(\mu^{1/2}f,\mu^{1/2}f)$.

To better understanding the behavior of $\Gamma$ and $L$, we consider weighted Sobolev norm $\|(a^{1/2})^w(\cdot)\|_{L^2}$, triple norm $\vertiii{\cdot}$ in \cite{Alexandre2012} and the norm $|\cdot|_{N^{s,\gamma}}$ in \cite{Gressman2011},
where 
\begin{align*}
	\vertiii{f}^2:&=\int B(v-v_*,\sigma)\Big(\mu_*(f'-f)^2+f^2_*((\mu')^{1/2}-\mu^{1/2})^2\Big)\,d\sigma dv_*dv,\\
	|f|^2_{N^{s,\gamma}}:&=\|\<v\>^{\gamma/2+s}f\|^2_{L^2}+\int(\<v\>\<v'\>)^{\frac{\gamma+2s+1}{2}}\frac{(f'-f)^2}{d(v,v')^{d+2s}}\1_{d(v,v')\le 1},
\end{align*}with $d(v,v'):=\sqrt{|v-v'|^2+\frac{1}{4}(|v|^2-|v'|^2)^2}$. Then by (2.13)(2.15) in \cite{Gressman2011}, Proposition 2.1 in \cite{Alexandre2012} and Theorem 1.2 in \cite{Global2019}, for $f\in\S$, $l\in\R$, we have the equivalence of norms:
\begin{align}\label{equivalent_norm}
	\|(a^{1/2})^wf\|^2_{L^2_v}\approx\vertiii{f}^2\approx|f|^2_{N^{s,\gamma}}\approx (-Lf,f)_{L^2_v}+\|\<v\>^lf\|_{L^2_v},
\end{align}with constants depending on $l$. These norms essentially describe the behavior of Boltzmann collision operator.

Also it's necessary to describe the weight $\<v\>$. So we apply the norm $|f|_{N^{s,\gamma}_l}$ in \cite{Gressman2011} to estimate linearized Boltzmann operator $L$ and collision term $\Gamma$, where for hard potential $\gamma+2s\ge 0$, 
\begin{equation}
	|f|^2_{N^{s,\gamma}_l} = |\<v\>^{l+\gamma/2+s}f|^2_{L^2_v}+\int_{\Rd}dv\,\<v\>^{2l+\gamma+2s+1}\int_{\Rd}dv'\,\frac{(f'-f)^2}{d(v,v')^{d+2s}}\1_{d(v,v')\le 1},
\end{equation}
\begin{equation}
	\|f\|^2_{N^{s,\gamma}_l} = \int_{\Rd}|f|^2_{N^{s,\gamma}_l}\,dx.
\end{equation}
Notice that by Plancherel's Theorem, 
\begin{align}\label{widehat}
	\|f\|^2_{N^{s,\gamma}_l} = \|\widehat{f}\|^2_{N^{s,\gamma}_l},
\end{align} where $\widehat{f}$ is Fourier transform of $f$ with respect to $x$. 
The Lemma 2.6, (2.10) in \cite{Gressman2011} gives the following coercive inequality. We would like to use norm $\|\cdot\|_{N^{s,\gamma}_l}$ when $l>0$ and norm $\|(a^{1/2})^w(\cdot)\|_{L^2}$ when $l=0$. 
\begin{Lem}For any $l\ge 0$, there exists $\nu_0>0$, $C>0$ such that
	\begin{align}\label{estimateofL}
		\big(-\<v\>^{2l}Lg,g\big)_{L^2_v}\ge \nu_0 |g|_{N^{s,\gamma}_l}-C\|g\|^2_{L^2_v}.
	\end{align}
\end{Lem}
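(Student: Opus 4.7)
The case $l=0$ is immediate from the equivalence of norms \eqref{equivalent_norm}, which gives $(-Lg,g)_{L^2_v}+\|g\|_{L^2_v}^2 \gtrsim |g|_{N^{s,\gamma}_0}^2$. For general $l\ge 0$ my plan is to upgrade this to a weighted statement by tracking how the factor $\<v\>^{2l}$ interacts with the collisional geometry underlying $L=L_1+L_2$.

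First I would separate the two pieces. The second operator $L_2$, thanks to the rapidly decaying Maxwellian factor $\mu^{1/2}(v+\alpha-h)$ in \eqref{L2}, is lower order: it can be viewed as a pseudo-differential operator whose symbol gains extra decay in velocity (belonging to $S(a\<v\>^{-N})$ for every $N$), so $|(-\<v\>^{2l} L_2 g,g)_{L^2_v}| \le \eta |g|_{N^{s,\gamma}_l}^2+C_\eta\|g\|_{L^2_v}^2$ for any small $\eta>0$, which may be absorbed. The bulk of the work lies with $L_1$. Starting from $(-\<v\>^{2l} L_1 g,g)_{L^2_v}$ and using the classical pre-post collisional change of variables $(v,v_*,\sigma)\mapsto(v',v'_*,\sigma')$, together with the involution $\sigma \mapsto -\sigma$, I would symmetrize the integral to rewrite the leading term as
\begin{equation*}
\tfrac12 \int B(v-v_*,\sigma)\,\mu_*\, \bigl(\<v\>^l g(v)-\<v'\>^l g(v')\bigr)^2 \, d\sigma\, dv_*\, dv,
\end{equation*}
plus a commutator remainder arising from the mismatch between $\<v\>^{2l}$ and the collisional weight $\<v\>^l\<v'\>^l$. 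The main quadratic term, upon evaluating the $v_*$ integral against $|v-v_*|^\gamma\mu_*$ and using the geometric identity $|v-v'|^2+\tfrac14(|v|^2-|v'|^2)^2 \asymp d(v,v')^2$, dominates the nonlocal part of $|g|_{N^{s,\gamma}_l}^2$; restricting to $d(v,v')\ge 1$ plus a cutoff argument produces the pointwise piece $|\<v\>^{l+\gamma/2+s}g|_{L^2_v}^2$.

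The main obstacle will be the commutator remainder $\int B\,\mu_*\,(\<v\>^{2l}-\<v\>^l\<v'\>^l)\,g(v)g(v')\,d\sigma\,dv_*\,dv$. Using the elementary bound $|\<v\>^l-\<v'\>^l|\lesssim |v-v'|(\<v\>^{l-1}+\<v'\>^{l-1})$ and the half-angle estimate $|v-v'|\lesssim \sin(\theta/2)|v-v_*|$, one extracts a factor $\sin(\theta/2)$ that strictly beats the angular singularity $b(\cos\theta)\sim \theta^{-d+1-2s}$, effectively reducing the commutator to an operator of order strictly less than $2s$. Splitting into the singular region $\theta\le\theta_0$ (where a cancellation lemma is applied to the difference $g(v')-g(v)$) and $\theta\ge \theta_0$ (where a regular change of variables reverts the integral to a Maxwellian-weighted $L^2$ bound), and then applying Cauchy–Schwarz with $\eta$-Young, the remainder is controlled by $\eta|g|_{N^{s,\gamma}_l}^2 + C_\eta\|g\|_{L^2_v}^2$. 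Choosing $\eta$ small enough closes the estimate with strictly positive $\nu_0$.
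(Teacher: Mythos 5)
The paper does not prove this lemma at all: it is imported verbatim, with attribution, from Lemma~2.6 and (2.10) of Gressman--Strain \cite{Gressman2011}, so there is no in-paper proof to compare against. Your plan is a sensible high-level outline of how one might try to derive such a weighted coercivity estimate from scratch, and the decomposition into a symmetrized main term plus a weight commutator is indeed the right skeleton. But as written the two genuinely hard steps are glossed over, and one of them is treated incorrectly.

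First, the claim that the symmetrized quadratic form $\tfrac12\int B\,\mu_*\,(\<v\>^l g - \<v'\>^l g')^2$ ``dominates the nonlocal part of $|g|^2_{N^{s,\gamma}_l}$'' after ``evaluating the $v_*$ integral'' and ``using the geometric identity $|v-v'|^2+\tfrac14(|v|^2-|v'|^2)^2\asymp d(v,v')^2$'' is not a calculation but is precisely the main theorem of \cite{Gressman2011}. That expression is the \emph{definition} of $d(v,v')^2$, not an identity; what must actually be shown is that, after integrating $B\,\mu_*$ over $(v_*,\sigma)$ along the Carleman sphere, the induced kernel in $(v,v')$ is comparable from below to $(\<v\>\<v'\>)^{(\gamma+2s+1)/2}d(v,v')^{-d-2s}\1_{d(v,v')\le 1}$. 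That comparison is the heart of Gressman and Strain's anisotropic Littlewood--Paley analysis, and your sketch in effect cites the conclusion you are trying to prove.

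Second, the commutator remainder argument has two concrete flaws. The cancellation lemma recalled in the appendix applies to integrands of the form $G(|v-v_*|,|v-v'|)(f'_*-f_*)$, i.e.\ to differences in $v_*$; it cannot be invoked for the difference $g(v')-g(v)$ that appears in your commutator term. And the claim that extracting one factor $\sin(\theta/2)\lesssim|v-v'|/|v-v_*|$ ``strictly beats the angular singularity'' is false for $s\ge\tfrac12$: after accounting for the $\sin^{d-2}\theta$ surface measure, the effective weight is $\theta^{-1-2s}\,d\theta$, and one power of $\theta$ leaves $\theta^{-2s}$, which is still non-integrable at $\theta=0$ unless $s<\tfrac12$. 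To handle the full range $s\in(0,1)$ one needs either a second-order Taylor expansion to extract two powers of $\theta$, or an argument that the resulting non-integrable operator is nonetheless of order strictly less than $2s$ and can be absorbed into a small multiple of $|g|^2_{N^{s,\gamma}_l}$; your one-line appeal to Cauchy--Schwarz and $\eta$-Young does not substitute for that analysis. Given these gaps, the honest proof is exactly what the paper does: cite Gressman--Strain.
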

By (6.6) in \cite{Gressman2011}, we have the trilinear upper bound: for multi-index $\beta$, $l\ge 0$, 
\begin{align}\label{estiofGamma0}
	\big|\big(\<v\>^{2l}\T(g,f,\partial^\beta_v(\mu^{1/2})),h\big)_{L^2_v}\big|\le C_\alpha\|\<v\>^lg\|_{L^2_v}|f|_{N^{s,\gamma}_{l}}|h|_{N^{s,\gamma}_l}.
\end{align}

Theorem 1.3 in my previous paper \cite{Deng2020} proves the global existence of Boltzmann equation without angular cutoff for hard potential. We define 
\begin{align*}
	B= \overline{-v\cdot\nabla_x+L},
\end{align*}as the closure of $(B,H(a\<v\>\<y\>))$ in $L^2_{x,v}$. Then
\begin{Thm}[Theorem 1.3, \cite{Deng2020}]\label{Thm1}Suppose $d\ge3$, $m>\frac{d}{2}$, $\gamma+2s\ge 0$. There exists $\varepsilon_0>0$ so small that if 
	\begin{align}\label{initial}
		\|f_0\|_{X}\le \varepsilon_0,
	\end{align}where $X$ is defined as \begin{align*}
	(f,g)_X = \delta(f,g)_{L^2_vH^m_x} + \int^{\infty}_0(e^{\tau B}f,e^{\tau B}g)_{L^2_vH^m_x}\,d\tau,
\end{align*}  
then there exists an unique global weak solution $f$ to Boltzmann equation 
	\begin{align*}
		f_t=Bf+\Gamma(f,f),\quad f|_{t=0}=f_0,
	\end{align*} satisfying 
	\begin{align}\label{globaletimate}
		\|f\|_{L^\infty([0,\infty);L^2_vH^m_x)}+\|f\|_{L^2([0,\infty);\H H^m_x)}\le C\varepsilon_0,
	\end{align}with some constant $C>0$. 
\end{Thm}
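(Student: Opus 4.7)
The plan is to build the solution by Picard iteration on the integral form of the equation and close a uniform energy estimate in the norm $X$, exploiting the hypocoercive structure that this norm bakes in. Concretely, I would define $f^0\equiv 0$ and, for each $n$, solve the linearized problem
\begin{align*}
\partial_t f^{n+1}=Bf^{n+1}+\Gamma(f^n,f^n),\qquad f^{n+1}|_{t=0}=f_0,
\end{align*}
using the semigroup $e^{tB}$. Uniform bounds in $L^\infty_t(X)\cap L^2_t(\H H^m_x)$ together with a contraction in a weaker norm then let me pass to a limit and identify the weak solution; uniqueness and continuous dependence follow from a standard energy estimate on the difference of two solutions using bilinearity of $\Gamma$ and smallness of $\|f\|_X$ to absorb the cross terms.

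The linear dissipation is the reason the norm $X$ was set up this way. Along a linear trajectory $g(t)=e^{tB}f_0$, the change of variable $u=\tau+t$ gives
\begin{align*}
\frac{d}{dt}\int_0^\infty\|e^{\tau B}g(t)\|_{L^2_vH^m_x}^2\,d\tau=-\|g(t)\|_{L^2_vH^m_x}^2,
\end{align*}
which is exactly the $x$-dissipation that the skew-adjoint transport $v\cdot\nabla_x$ fails to produce on its own. Combining with the $v$-coercivity $(Lg,g)_{L^2_v}\le-\nu_0\|g\|_{\H}^2+C\|g\|_{L^2_v}^2$ coming from \eqref{estimateofL} and choosing $\delta$ small enough to absorb the $C\delta\|g\|^2$ term yields the clean linear bound
\begin{align*}
\tfrac{d}{dt}\|g\|_X^2+\kappa_1\|g\|_{\H H^m_x}^2+\kappa_2\|g\|_{L^2_vH^m_x}^2\le 0.
\end{align*}

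The nonlinear correction is controlled by the trilinear upper bound \eqref{estiofGamma0} with $l=0$ and $\beta=0$, combined with the algebra property of $H^m_x$ on $\R^d_x$ (valid because $m>d/2$), giving $|(\Gamma(f,f),f)_{L^2_vH^m_x}|\lesssim\|f\|_{L^2_vH^m_x}\|f\|_{\H H^m_x}^2$. Testing the equation in the $X$-inner product and using the Cauchy–Schwarz inequality in $\tau$ against the second piece of $\|\cdot\|_X$ yields an analogous bound for the contribution from the integrated part, so that the full energy inequality reads
\begin{align*}
\tfrac{d}{dt}\|f\|_X^2+\kappa\|f\|_{\H H^m_x}^2\le C\|f\|_X\|f\|_{\H H^m_x}^2.
\end{align*}
A continuation argument starting from $\|f_0\|_X\le\varepsilon_0$ absorbs the right-hand side into the dissipation and closes the estimate globally in time, producing \eqref{globaletimate}.

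The main obstacle will be rigorously handling the nonlinear contribution to the integrated seminorm $\int_0^\infty\|e^{\tau B}f(t)\|_{L^2_vH^m_x}^2\,d\tau$: one cannot naively differentiate under the $\tau$-integral without knowing how $e^{\tau B}$ acts on $\Gamma(f,f)$ in the graph norm, so the cleanest route is to combine Duhamel on $f(t)$ with the linear decay to re-express the $\tau$-integration as a $\tau$-convolution of the nonlinearity and then use the trilinear bound fiberwise. A prerequisite technical point is to show that $B$, closed from the dense domain $H(a\langle v\rangle\langle y\rangle)$, actually generates a strongly continuous contraction semigroup on $L^2_vH^m_x$ so that $e^{\tau B}$ and the $X$-integral make sense; this is done by verifying dissipativity of $-B$ via \eqref{estimateofL} and invoking Lumer–Phillips, with the weight structure in \eqref{Defofa} controlling the unboundedness of $L$.
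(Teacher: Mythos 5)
This theorem is quoted from \cite{Deng2020} and is \emph{not} proved in the present paper, so there is no internal proof to compare against; what follows is an assessment of your reconstruction on its own terms. Your overall mechanism is the right one and is consistent with how the author deploys the $X$-norm and the iteration sketched in Section~2: the second piece of $\|\cdot\|_X$ is engineered so that along a linear trajectory $g(t)=e^{tB}f_0$ the change of variable $u=\tau+t$ yields $\tfrac{d}{dt}\int_0^\infty\|e^{\tau B}g(t)\|^2_{L^2_vH^m_x}\,d\tau=-\|g(t)\|^2_{L^2_vH^m_x}$, manufacturing the spatial dissipation that the skew-adjoint transport cannot produce; this is combined with the microscopic coercivity \eqref{estimateofL} and the trilinear bound \eqref{estiofGamma0} (at $l=0$, using $m>d/2$ for the $H^m_x$ algebra structure) to close a global-in-time energy inequality by continuation, and a linearized iteration passes to the weak solution with uniqueness from a difference estimate. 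You also rightly flag that differentiating the $\tau$-integral in the nonlinear case is the delicate step.

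There is, however, one concrete error in a foundational step. You propose to show that $B$ generates a \emph{contraction} semigroup on $L^2_vH^m_x$ by verifying dissipativity of $-B$ from \eqref{estimateofL} and invoking Lumer--Phillips. But \eqref{estimateofL} reads $(-Lg,g)_{L^2_v}\ge\nu_0|g|^2_{N^{s,\gamma}}-C\|g\|^2_{L^2_v}$, so $\Re(-Bf,f)_{L^2_vH^m_x}\ge\nu_0\|f\|^2_{\H H^m_x}-C\|f\|^2_{L^2_vH^m_x}$, which can be negative: $-B$ is only quasi-dissipative, so at best $B-\lambda I$ is dissipative for large $\lambda$, giving a $C_0$-semigroup with exponential bound rather than a contraction. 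Moreover, if $e^{tB}$ \emph{were} a contraction with no actual decay, the integral $\int_0^\infty\|e^{\tau B}f_0\|^2\,d\tau$ would generically diverge, so $X$ would be trivial. The finiteness of that integral is a genuine hypocoercive decay statement, not a consequence of raw dissipativity, and it has to be established separately (the paper records a sufficient condition, $\|f_0\|_X\lesssim\|f_0\|_{L^2_vH^m_x}+\|(a^{-1/2})^wf_0\|_{L^2_v(L^p_x)}$ for $p\in[1,\tfrac{2d}{d+2})$, precisely because this is nontrivial). You should separate the two issues: generation of the semigroup (Hille--Yosida after a shift) and square-integrability in $\tau$ (hypocoercivity). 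A minor stylistic point: the paper's own iteration uses $\Gamma(f^n,f^{n+1})$, keeping the scheme linear in the new iterate, whereas you write $\Gamma(f^n,f^n)$ as a pure source; both variants can be made to close, but the former is the one the author actually uses.
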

By \cite{Deng2020}, the norm $X$ satisfies that for $p\in[1,\frac{2d}{d+2})$, 
\begin{align*}
\|f_0\|_{X} \lesssim	\|f_0\|_{L^2_vH^m_x}+\|(a^{-1/2})^wf_0\|_{L^2_v(L^p_x)}
\end{align*}

\subsection{Main Result}
Our main result is the regularity of Boltzmann equation without angular cutoff for hard potential. That is, the weak solution we obtain in Theorem \ref{Thm1} is actually smooth in spatial variable $x$ and velocity variable $v$. The previous works such as \cite{Chen2018,Alexandre2010,Alexandre2011aa} require the solution to have some regularity at the beginning, while our work require only the assumption on the initial data \eqref{initial1}. That is, we start from the weak solution \eqref{globaletimate} and find out its regularity directly. 

\begin{Thm}\label{Main}
	\label{Thm}Assume $\gamma+2s>0$ and $m_0>\frac{d}{2}$. Suppose $f_0\in L^2_vH^{m_0}_x$. There exists sufficiently small $\varepsilon_0>0$ such that if 
	\begin{align}
		\label{initial1}\|f_0\|_X\le \varepsilon_0,
	\end{align}then the weak solution $f$ to Boltzmann equation near Maxwellian obtained in Theorem \ref{Thm1}:
	\begin{align}\label{Boltzmann}
		\partial_tf = Bf + \Gamma(f,f),\quad f|_{t=0}=f_0,
	\end{align}satisfies that for $\tau>0$, $m,k,l\in\N$,  
\begin{align*}
	\|\<v\>^l\<D_v\>^k\<D_x\>^mf\|^2_{L^\infty([\tau,\infty);L^2_vH^{m_0}_x)}\le C_\tau\varepsilon_0^2.
\end{align*}
Consequently, 
$f(t)\in C^\infty(\R^d_x;\S(\R^d_v))$ for $t>0$.
\end{Thm}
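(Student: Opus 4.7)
The idea is to upgrade the weak solution from Theorem \ref{Thm1} by introducing a family of time-weighted pseudo-differential operators whose weights vanish at $t=0$; this avoids assuming any initial regularity of $f_0$ beyond $L^2_vH^{m_0}_x$, yet becomes non-degenerate on $[\tau,\infty)$ and thus produces the claimed bound.

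\emph{Velocity regularity.} Fix $k,l\in\N$ and let $\phi_\tau(t)$ be a smooth cutoff with $\phi_\tau\equiv 0$ on $[0,\tau/2]$ and $\phi_\tau\equiv 1$ on $[\tau,\infty)$. Apply the operator $W^{\mathrm v}_{k,l}:=\phi_\tau(t)\,\<v\>^l\<D_v\>^k$ to \eqref{Boltzmann}, pair in $L^2_vH^{m_0}_x$ with $W^{\mathrm v}_{k,l}f$, and invoke the coercive bound \eqref{estimateofL} together with \eqref{widehat}. After commuting $\<D_v\>^k$ past $L$ using the $S(a)$ symbol calculus of \cite{Deng2020,Global2019}, one extracts a dissipation of order $|W^{\mathrm v}_{k,l}f|_{N^{s,\gamma}_l}$. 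The transport commutator $[v\cdot\nabla_x,\<D_v\>^k]$ produces a factor of $\<D_x\>$, which forces spatial derivatives to be controlled simultaneously.

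\emph{Spatial regularity and induction.} Exploit the hypoelliptic identity $[v\cdot\nabla_x,\partial_{v_i}]=-\partial_{x_i}$: velocity regularity, tracked with appropriate time weights, converts into spatial regularity. Introduce the combined weight
\[
W_{k,m,l}:=\phi_\tau(t)\,t^{\alpha(k,m)}\,\<v\>^l\,\<D_v\>^k\,\<D_x\>^m,
\]
with $\alpha(k,m)$ chosen so that the Poisson bracket $\{v\cdot\eta,\<\eta\>^k\<y\>^m\}$ -- which shifts a derivative from $v$ to $x$ -- is compensated by a loss in the time weight; this is the Poisson-bracket analysis announced in the abstract. Because the coercive estimate provides only $s$ derivatives in $v$ per iteration, $\alpha(k,m)$ must grow faster in $m$ than in $k$, matching the hypoelliptic scaling for fractional diffusion of order $2s$. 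A double induction on $(k,m)$ (ordered lexicographically, with $l$ fixed) then runs with the schematic energy identity
\[
\frac{d}{dt}\|W_{k,m,l}f\|^2_{L^2_vH^{m_0}_x}+\nu_0\|\<D_x\>^{m_0}W_{k,m,l}f\|^2_{N^{s,\gamma}_l}\lesssim\!\sum_{(k',m')<(k,m)}\!\|W_{k',m',l}f\|^2_{L^2_vH^{m_0}_x}+\mathrm{NL},
\]
where the base case is supplied by \eqref{globaletimate}. The nonlinear contribution $\mathrm{NL}$, arising from $W_{k,m,l}\Gamma(f,f)$ after Leibniz expansion, is controlled by the trilinear estimate \eqref{estiofGamma0} together with the smallness $\|f\|_{L^\infty_tL^2_vH^{m_0}_x}\lesssim\varepsilon_0$ from \eqref{globaletimate}, so that the top-order piece is absorbed into the dissipation and lower-order pieces into the induction. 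Integrating from $0$ to arbitrary $T$, the vanishing of $\phi_\tau(t)t^{\alpha(k,m)}$ at $t=0$ kills the initial contribution and yields the uniform bound on $[\tau,\infty)$. Arbitrariness of $(k,m,l)$ combined with Sobolev embedding in $x$ then gives $f(t)\in C^\infty(\R^d_x;\S(\R^d_v))$.

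\textbf{Main obstacle.} The chief difficulty is identifying a scaling $\alpha(k,m)$ such that every commutator of $W_{k,m,l}$ with $B=-v\cdot\nabla_x+L$ can be absorbed into either the coercive dissipation or strictly lower-order terms $W_{k',m',l'}$. Since $L$ has symbol in the non-standard weight class $S(a)$ rather than a purely differential class, each commutation produces remainder symbols given by Poisson brackets of $a$ with $\<\eta\>^k\<y\>^m$, and one must verify that the composite weight $a$ (which mixes $v$, $\eta$, and the angular variable $\eta\wedge v$) does not obstruct this bootstrap. A secondary technical point is that the nonlinear contribution of the form $\Gamma(W^{\mathrm v}_{k,l}f,f)$ requires $v$-derivatives to be reduced to the norm $|\cdot|_{N^{s,\gamma}_l}$ via commutator estimates within the framework of \eqref{estiofGamma0}.
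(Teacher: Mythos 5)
Your high-level strategy (time weights vanishing at $t=0$, coercivity of $L$ for the $v$-dissipation, trilinear bound for the nonlinearity, transport-induced gain of $x$-regularity, double induction on derivative orders) matches the paper's in spirit, but the proposal has a genuine gap at its central point and a couple of other missing ingredients.

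The heart of the matter is the spatial regularity step, which you attribute to the Poisson bracket ``$\{v\cdot\eta,\<\eta\>^k\<y\>^m\}$.'' This bracket does not appear and is not the right object: after Fourier transform in $x$, the transport operator has symbol $v\cdot y$, not $v\cdot\eta$, and the commutator of $v\cdot\nabla_x$ with $\<D_v\>^k\<D_x\>^m$ produces \emph{loss}, not gain, since $[v\cdot\nabla_x,\partial^\beta_v]\in Op(\<\eta\>^{k-1}\<y\>)$ is a term that must be absorbed rather than one that supplies dissipation. The paper instead constructs an auxiliary symbol
\begin{align*}
\theta(v,\eta) = (1+|v|^2+|y|^2+|v\wedge y|^2)^{\delta_1-1}\bigl(y\cdot\eta+(v\wedge y)\cdot(v\wedge\eta)\bigr)\chi(v,\eta)\in S(1),
\end{align*}
with $y$ entering as a parameter and $\delta_1,\delta_2>0$ chosen so that the cutoff $\chi$ balances the $(v,\eta)$-growth against the $(v,y)$-growth. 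The crucial computation is then the Poisson bracket with the transport symbol,
\begin{align*}
\{\theta, v\cdot y\} = b(v,y) + I_1+I_2+I_3,\qquad b(v,y) = (1+|v|^2+|y|^2+|v\wedge y|^2)^{\delta_1},\ I_j\in S(a),
\end{align*}
which is what produces a positive-order symbol $b$ in the $y$-variable (hence spatial regularity of order $\delta_1>0$, this being where $\gamma+2s>0$ is needed) while the error terms $I_j$ are absorbed into the dissipation $\|(a^{1/2})^w\cdot\|^2$. Without constructing $\theta$ (or an equivalent) and verifying the membership $I_j\in S(a)$, the proof cannot close: a naive Hörmander-style iteration on $[v\cdot\nabla_x,\partial_{v_i}]=-\partial_{x_i}$ does not by itself give a usable positive commutator for the fractional operator $L\sim\<v\>^\gamma(1+|\eta|^2+|v\wedge\eta|^2+|v|^2)^s$.

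Two further points. First, the proposal treats $\Gamma(f,f)$ directly using the smallness of $f$ and the trilinear bound, but the formal energy identities require the test function to be sufficiently smooth; the paper overcomes this by the $\varepsilon M^*Mf$ regularization with $M=\<v\>^N\<D_v\>^N\<D_x\>^N$ (passing $\varepsilon\to0$ at the end via Banach--Alaoglu) and by iterating $\partial_t f^{n+1}=Bf^{n+1}+\Gamma(f^n,f^{n+1})$ so that the trilinear estimate lands on a fixed $g=f^n$ satisfying an a priori bound (2.6). Your proposal needs one of these devices to justify the energy computation rigorously. Second, your heuristic that $\alpha(k,m)$ must grow ``faster in $m$ than in $k$'' is not what the paper uses: the time weight $t^{\kappa(m+k)}$ is symmetric in $m$ and $k$, with $\kappa$ simply chosen large enough that Young's inequality converts $t^{\kappa(m+k)-1/2}$ into $\delta a^{1/2}t^{\kappa(m+k)}+C_\delta\<v\>^{-l}\<\eta\>^{-k}$ and the analogue with $b^{1/2}$. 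The exponent imbalance you anticipated is in fact encoded inside the symbol exponents $\delta_1,\delta_2$, not in the time weight.
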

In order to obtain the regularity on $x$, we follow the idea in \cite{Chen2018} and 
define 
\begin{align*}
b(v,y) &= (1+|v|^2+|y|^2+|v\wedge y|^2)^{\delta_1},\\
\theta(v,\eta) &= (1+|v|^2+|y|^2+|v\wedge y|^2)^{\delta_1-1} (y\cdot\eta+(v\wedge y)\cdot(v\wedge\eta))\chi(v,\eta).
\end{align*}
The Poisson bracket between $v\cdot\nabla_x$ and $\theta$ can give us the regularity of $f$ with order $b^{1/2}(v,y)$. In brief, this yields regularity on $x$ with order $\delta_1/2$. Noting the definition of $\delta_1$ in Theorem \ref{Thm33}, we require $\gamma+2s>0$ in order to obtain a positive order $\delta_1$, which is also the only place that require $\gamma+2s$ to be strictly positive.  
The assumption on initial data $f_0$ comes from Theorem \ref{Thm1}, which is natural for the existence theory of Boltzmann equation in the whole space in both cutoff and non cutoff cases.

The smoothing effect of the Boltzmann equation non-cutoff collision kernel were discussed in many context. At the beginning, entropy production estimate for non cutoff assumption were established, as in \cite{Alexandre2000,Lions1998}. Their result were widely applied in the theory of non-cutoff Boltzmann equation. Later on, many works discover the optimal regular estimate of Boltzmann collision operator in $v$ in different setting. We refer to \cite{Alexandre2011,Global2019,Alexandre2013,Gressman2011a,Mouhot2007} for the dissipation estimate of collision operator, and \cite{Alexandre2011aa,ALEXANDRE2005,Alexandre2009a,Alexandre2010,Barbaroux2017,Barbaroux2017a,Chen2018,Chen2011,Chen2012,Lekrine2009,Lerner2014,Lerner_2015} for smoothing effect of the solution to Boltzmann equation in different aspect. These works show that the Boltzmann operator behaves locally like a fractional operator:
\begin{align*}
	Q(f,g)\sim (-\Delta_v)^sg+\text{lower order terms}.
\end{align*}
More precisely, according to the symbolic calculus developed by Alexandre-H{\'{e}}rau-Li \cite{Global2019}, the linearized Boltzmann operator behaves essentially as 
\begin{align*}
	L \sim \<v\>^\gamma(-\Delta_v-|v\wedge\partial_v|^2+|v|^2)^s+\text{lower order terms}.
\end{align*}
This diffusion property shows that the spatially homogeneous Boltzmann equation behaves like fractional heat equation, while the spatially inhomogeneous Boltzmann behaves as the generalized Kolmogorov equation. We refer to \cite{Barbaroux2017a,Lekrine2009,Lerner_2015} for Kac equation, the one dimensional model of Boltzmann equation, and \cite{Morimoto2009} for similar kinetic equation.

\paragraph{Organization of the article}
Our analysis is organized as follows. In Section 2, we provide some basic lemmas that is applied from time to time in our analysis. In section 3, we give the rigorous argument for obtaining regularity on $x$ and $v$. The appendix gives some general theory on pseudodifferential calculus.

\paragraph{Notations}
Throughout this article, we shall use the following notations. 
$C$ represents the constants which might be changed from line to line. $\S(\Rd)$ is the set of Schwartz functions on $\Rd$.
For any $v\in\Rd$, we denote $\<v\>=(1+|v|^2)^{1/2}$. The gradient in $v$ is denoted by $\partial_v$. The notation $a\approx b$ (resp. $a\gtrsim b$, $a\lesssim b$) for positive real function $a$, $b$ means there exists $C>0$ not depending on possible free parameters such that $C^{-1}a\le b\le Ca$ (resp. $a\ge C^{-1}b$, $a\le Cb$) on their domain. $\Re (a)$ means the real part of complex number $a$. $[a,b]=ab-ba$ is the commutator between operators. $\{a(v,\eta),b(v,\eta)\} =  \partial_\eta a_1\partial_va_2 - \partial_va_1\partial_\eta a_2$ is the Poisson bracket. $\Gamma=|dv|^2+|d\eta|^2$ be the admissible metric and $S(m)=S(m,\Gamma)$ be the symbol class. 

For pseudo-differential calculus, we write $(x,v)\in \Rd\times\Rd$ is the space-velocity variable and $(y,\eta)\in \Rd\times\Rd$ is the corresponding variable in dual space (the variable after Fourier transform).

\section{Basic Lemmas}\label{sec2}
Assume $\gamma+2s>0$ and fix $m_0>\frac{d}{2}$. In this section, we give the sketch proof of our main theorem, the iteration of our solution. Also, some pseudo-differential calculus and the estimate on collision term $\Gamma(f,f)$ are discussed. 

\paragraph{Iteration}
As in \cite{Deng2020}, we will prove our main result by using iteration. That is, we set $f_0\in L^2_vH^{m_0}_x$ satisfying \eqref{initial1} and $f^0=0$. Then we solve linear equation
\begin{align}
	\partial_tf^{n+1} = Bf^{n+1} + \Gamma(f^n,f^{n+1}),\quad f^{n+1}|_{t=0} = f_{0},
\end{align}to get the sequence $\{f^n\}(n\in\N)$. 
Theorem 1.3 in \cite{Deng2020} shows that the sequence $\{f^n\}$ is strongly convergent to a limit $f$ in space $L^\infty_t([0,\infty),L^2H^{m_0}_x)$.

So in the followings, we will firstly analyze linear equation 
\begin{align}\label{lineareq}
	\partial_tf = Bf + \Gamma(g,f),\quad f|_{t=0} = f_{0}.
\end{align}We also set up the iteration assumption: Fix $\varepsilon_1\in (0,1]$ and for $m,k,l\in\N$, we assume
\begin{equation}
	\label{assumong}
	\begin{split}
		\|g\|_{L^2_vH^{m_0}_x} \le C\varepsilon_0,\\
		\sup_{0\le t\le T}\|t^{\kappa m}\<v\>^l\<D_v\>^k\<D_x\>^mg\|_{L^2_vH^{m_0}_x}^2\le \varepsilon_1,
	\end{split}
\end{equation}
In order to make the solution $f$ smooth enough to substitute as a test function, we consider the vanishing method. That is, we define for sufficiently large $N>\gamma+2s+2$ that
\begin{align*}
	M = \<v\>^N\<D_v\>^N\<D_x\>^N,
\end{align*}
and let $f=f_\varepsilon$ to be the solution of linear equation 
\begin{align}
	\partial_tf + v\cdot\nabla_xf + \varepsilon M^*Mf = Lf + \Gamma(g,f),\quad f|_{t=0} = f_{0,\varepsilon}, \label{lineareqvarepsilon}
\end{align}with mollified initial data $f_{0,\varepsilon}\in\S$, which converge to $f_0$ in $L^2_vH^{m_0}_x$. Then this $f=f_\varepsilon$ is smooth enough to become the test function.

\paragraph{Pseudodifferential Calculus}
Here we mainly give some control on invertible pseudodifferential operator, which is applied in our analysis from time to time.
Let $m_K(v,\eta)$ be a $\Gamma$-admissible weight function depending on $K$, $c$ be any $\Gamma$-admissible weight. Then lemma 2.1 and 2.3 in \cite{Deng2020a} can be reformulated as the following.
\begin{Lem}[Lemma 2.1, \cite{Deng2020a}]\label{inverse_pseudo}
	Assume $a_K\in S(m_K)$, $\partial_\eta (a_{K})\in S(K^{-\kappa}m_{K})$ uniformly in $K$ and $|a_{K}|\gtrsim m_{K}$. Then \\
	(1) $a^{-1}_{K}\in S(m^{-1}_{K})$, uniformly in $K$, for $K>1$.\\
	(2) There exists $K_0>1$ sufficiently large such that for all $K>K_0$, $a^w_{K}:H(m_Kc)\to H(c)$ is invertible and its inverse $(a^w_{K,l})^{-1}: H(c) \to H(m_Kc)$ satisfies
	\begin{align*}
		(a^w_{K})^{-1} = G_{1,K}(a^{-1}_{K})^w = (a^{-1}_{K})^wG_{2,K},
	\end{align*}where $G_{1,K}\in \L(H(m_Kc))$, $G_{2,K}\in \L(H(c))$ with operator norm smaller than $2$. Also, by the equivalence of invertibility, $(a_K^w)^{-1}\in Op(m^{-1}_K)$.
\end{Lem}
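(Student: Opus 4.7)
My plan is to obtain (1) by direct differentiation of $a_K^{-1}$, and to obtain (2) by using $(a_K^{-1})^w$ as an approximate inverse on both sides, correcting it by a Neumann series whose convergence is supplied by the $K^{-\kappa}$ gain in $\partial_\eta a_K$.

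For part (1), by Faà di Bruno each derivative $\partial^\alpha(a_K^{-1})$ is a finite linear combination of terms $a_K^{-(1+j)}\prod_i \partial^{\alpha_i}a_K$ with $\sum_i\alpha_i=\alpha$. Every $\partial^{\alpha_i}a_K$ is $O(m_K)$ uniformly in $K$ by hypothesis, and the lower bound $|a_K|\gtrsim m_K$ gives $|a_K^{-(1+j)}|\lesssim m_K^{-(1+j)}$. The product is then $O(m_K^{-1})$ uniformly in $K$, so $a_K^{-1}\in S(m_K^{-1})$.

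For part (2), the central step is the Weyl composition expansion
\[
a_K\# a_K^{-1} = a_K\,a_K^{-1} + \sum_{j\ge 1}\frac{1}{j!}\Big(\frac{1}{2i}\Big)^j \sigma(D)^j\big(a_K(v_1,\eta_1)\,a_K^{-1}(v_2,\eta_2)\big)\Big|_{(v_1,\eta_1)=(v_2,\eta_2)},
\]
whose $j=0$ term equals $1$. Every summand with $j\ge 1$ contains at least one factor $\partial_\eta a_K\in S(K^{-\kappa}m_K)$, paired with factors $\partial_v a_K^{-1}\in S(m_K^{-1})$ supplied by (1); tracking these through the $\Gamma$-admissible calculus gives $r_K:=a_K\# a_K^{-1}-1\in S(K^{-\kappa})$ uniformly in $K$, hence $\|r_K^w\|_{\L(H(c))}\lesssim K^{-\kappa}$ by the Calderón–Vaillancourt-type bound for admissible metrics. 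The reversed composition $\tilde r_K:=a_K^{-1}\# a_K-1$ obeys the same estimate on $H(m_Kc)$. Choosing $K_0$ so that both operator norms are below $1/2$ for $K>K_0$, the Neumann series produce $G_{2,K}:=(I+r_K^w)^{-1}\in\L(H(c))$ and $G_{1,K}:=(I+\tilde r_K^w)^{-1}\in\L(H(m_Kc))$, each of norm strictly less than $2$. Then $(a_K^{-1})^w G_{2,K}$ is a right inverse and $G_{1,K}(a_K^{-1})^w$ a left inverse of $a_K^w$; they must coincide with $(a_K^w)^{-1}$, and the general equivalence of invertibility in the Weyl calculus places $(a_K^w)^{-1}\in Op(m_K^{-1})$.

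The main obstacle is the uniformity in $K$: every seminorm fed into Calderón–Vaillancourt must be controlled by a constant independent of $K$, so that the only $K$-dependence surviving in $r_K$ and $\tilde r_K$ is the $K^{-\kappa}$ factor granted by the hypothesis on $\partial_\eta a_K$. Once this bookkeeping is in place, the Neumann-series construction is routine.
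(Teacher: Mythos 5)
Your overall strategy—construct $a_K^{-1}\in S(m_K^{-1})$, use $(a_K^{-1})^w$ as a two-sided approximate inverse, and close by a Neumann series driven by the $K^{-\kappa}$ gain—is the standard route and appears sound. Part (1) via Fa\`a di Bruno is correct: each term $a_K^{-(1+j)}\prod_i\partial^{\alpha_i}a_K$ is $O(m_K^{-(1+j)}\cdot m_K^{\,j})=O(m_K^{-1})$ uniformly in $K$, and the same bookkeeping shows $\partial_\eta(a_K^{-1})\in S(K^{-\kappa}m_K^{-1})$, which you indeed use in (2). Note that the paper itself does not prove this lemma; it is cited from the reference, so the comparison here is to the natural argument rather than to an in-text proof.

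One point worth tightening in part (2): the Weyl composition expansion you wrote as an infinite convergent sum over $j$ is in general only an asymptotic expansion. The rigorous version (and the one recorded in the appendix, formula \eqref{compostion}) is a finite Taylor expansion with an integral remainder. Since $a_K a_K^{-1}\equiv 1$ exactly, it suffices to use the first-order version
\begin{equation*}
a_K\#a_K^{-1}-1=\int_0^1\bigl(\partial_\eta a_K\#_\theta\partial_v a_K^{-1}-\partial_v a_K\#_\theta\partial_\eta a_K^{-1}\bigr)\,d\theta,
\end{equation*}
where $\#_\theta$ maps $S(M_1)\times S(M_2)\to S(M_1M_2)$ with seminorm control uniform in $\theta$ (estimate \eqref{sharp_theta}). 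Each integrand then carries exactly one factor in $S(K^{-\kappa}m_K)$ or $S(K^{-\kappa}m_K^{-1})$, giving $r_K\in S(K^{-\kappa})$ uniformly in $K$, which is what you need to feed into the Calder\'on--Vaillancourt-type bound. With that substitution the Neumann series, the matching of left and right inverses, and the conclusion $(a_K^w)^{-1}\in Op(m_K^{-1})$ all go through as you describe. (Minor: your $1/(2i)$ normalization in the composition formula disagrees with the paper's $1/(4\pi i)$ Fourier convention; this is cosmetic.)
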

\begin{Lem}[Lemma 2.3, \cite{Deng2020a}]\label{pseudo1}Let $m,c$ be $\Gamma$-admissible weight and $A\in S(m)$. Assume $A^w:H(mc)\to H(c)$ is invertible.
	If $B\in S(m)$, then there exists $C>0$, depending only on the seminorms of symbols to $(A^w)^{-1}$ and $B^w$, such that for $f\in H(mc)$,
	\begin{align*}
		\|B(v,D_v)f\|_{H(c)}+\|B^w(v,D_v)f\|_{H(c)}\le C\|A^w(v,D_v)f\|_{H(c)}.
	\end{align*}
\end{Lem}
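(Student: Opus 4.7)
The plan is to exploit the invertibility of $A^w$ to write $B^w$ and $B(v,D_v)$ as compositions $B^w(A^w)^{-1}\cdot A^w$ and $B(v,D_v)(A^w)^{-1}\cdot A^w$, and then show that the leading factors are bounded operators on $H(c)$ with norm controlled by seminorms of the symbols.

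The first step is to obtain symbolic control on $(A^w)^{-1}$. Since $A\in S(m)$ and $A^w:H(mc)\to H(c)$ is invertible, the elliptic theory in the Hörmander--Lerner calculus (which is exactly the content invoked in Lemma \ref{inverse_pseudo}) gives that $(A^w)^{-1}\in\mathrm{Op}\,S(m^{-1})$, and its symbol's seminorms in $S(m^{-1})$ are controlled by those of $A$ together with the ellipticity constant. In particular, $(A^w)^{-1}:H(c)\to H(mc)$ is bounded with operator norm depending only on finitely many seminorms.

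The second step is the continuity of $B^w$ and $B(v,D_v)$ as maps $H(mc)\to H(c)$. This is the standard weighted Calderón--Vaillancourt type theorem for admissible metrics in Lerner's text: any symbol $B\in S(m)$ yields, under either the Weyl or the standard quantization, a bounded operator $H(mc)\to H(c)$ whose norm is bounded by a finite number of seminorms of $B$ in $S(m)$. Composing with $(A^w)^{-1}:H(c)\to H(mc)$ produces operators $B^w(A^w)^{-1}$ and $B(v,D_v)(A^w)^{-1}$ on $H(c)$ whose norms depend only on seminorms of $B$ and of the symbol of $(A^w)^{-1}$. Equivalently, via the composition formula for admissible symbols, $B^w\#(A^w)^{-1}_{\mathrm{symb}}\in S(m\cdot m^{-1})=S(1)$, and operators with $S(1)$ symbols are bounded on $H(c)$.

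The third step is to assemble the estimate. For $f\in H(mc)$, apply $(A^w)^{-1}$ to $A^wf$ and then $B^w$ (respectively $B(v,D_v)$) to obtain
\begin{align*}
\|B^w f\|_{H(c)}=\|B^w(A^w)^{-1}A^w f\|_{H(c)}\le C\|A^w f\|_{H(c)},
\end{align*}
and similarly for $B(v,D_v)f$; summing gives the stated inequality with $C$ depending only on the seminorms of $B$ in $S(m)$ and of the symbol of $(A^w)^{-1}$ in $S(m^{-1})$.

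The main subtlety, which is the only real work, is the bookkeeping of seminorms in the admissible-metric calculus: one must verify that the continuity norm of $B^w$ (and of $B(v,D_v)$) from $H(mc)$ into $H(c)$ is indeed dominated by finitely many $S(m)$-seminorms of $B$ independently of $c$, and symmetrically for $(A^w)^{-1}$. Once this is granted from the general calculus of Hörmander/Lerner for the flat metric $\Gamma=|dv|^2+|d\eta|^2$, the proof is a one-line composition argument. The two quantizations are handled identically because standard and Weyl quantizations of a symbol in $S(m)$ differ by an operator whose symbol still lies in $S(m)$ (in fact in $S(m)$ with a lower-order expansion), so the same continuity bound applies.
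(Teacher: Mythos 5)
Your proof is correct, and indeed the paper states this lemma as a citation from \cite{Deng2020a} without reproducing the argument, so there is no in-text proof to compare against; what you have written is the standard argument. The factorization $B^w f = \big(B^w(A^w)^{-1}\big)A^w f$ reduces everything to showing that $B^w(A^w)^{-1}$ is bounded on $H(c)$, and this follows because the invertibility of $A^w:H(mc)\to H(c)$ is equivalent (as stated in the paper's appendix, in the paragraph on the Wiener/Beals property) to the existence of a symbolic inverse in $S(m^{-1})$, so that $B\#(A^w)^{-1}_{\mathrm{symb}}\in S(1)$ and $S(1)$-operators are bounded on every $H(c)$. Your handling of the standard quantization via $B(v,D_v)=(J^{-1/2}B)^w$ with $J^{-1/2}B\in S(m)$ is exactly right and also matches the identity recorded in the appendix. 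One small remark for precision: your appeal to Lemma~\ref{inverse_pseudo} is slightly off, since that lemma carries extra hypotheses (uniformity in a parameter $K$, a gain $\partial_\eta a_K\in S(K^{-\kappa}m_K)$, and an ellipticity bound) that are not assumed here; the correct source for $(A^w)^{-1}\in\mathrm{Op}\,S(m^{-1})$ is the general invertibility-equals-symbolic-invertibility equivalence of the Weyl--H\"ormander calculus stated at the end of the appendix, not Lemma~\ref{inverse_pseudo} itself. With that reference corrected, the proof is complete, and the seminorm dependence of the constant follows from the continuity theorem and the composition estimate \eqref{sharp_theta} exactly as you indicate.
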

\begin{Coro}\label{pseudobound}Let $m,m_1,m_2$ be $\Gamma$-admissible weight. \\
(1)	If $A^w:H(m_1)\to L^2\in Op(m_1)$, $B^w:H(m_2)\to L^2\in Op(m_2)$ are invertible, then for $f\in\S$, 
	\begin{align*}
		\|B^wA^wf\|_{L^2}\approx \|A^wB^wf\|_{L^2},
	\end{align*}where the constant depends only on seminorms of symbols to $A^w,B^w,(A^w)^{-1},(B^w)^{-1}$. \\
(2) If $A^w:H(m)\to L^2\in Op(m)$, $B_1^w:H(m_1)\to L^2\in Op(m_1)$, $B_2^w:H(m_2)\to L^2\in Op(m_2)$ are invertible and $m\lesssim m_1+m_2$, then 
\begin{align*}
\|A^wf\|_{L^2}\le C(\|B_1^wf\|_{L^2}+\|B_2^wf\|_{L^2}) 
\end{align*}
\end{Coro}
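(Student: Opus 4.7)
The plan is to deduce both parts from the pseudo-differential toolkit built in Lemmas \ref{inverse_pseudo} and \ref{pseudo1}, by means of composition and quadratic-form identities on the Weyl side.

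For part (1), I would rewrite
\[
B^w A^w f \;=\; A^w B^w f + [B^w, A^w] f,
\]
and observe that the Weyl composition formula places both $A^w B^w$ and $B^w A^w$ in $Op(m_1 m_2)$, with the commutator $[B^w,A^w]\in Op(m_1 m_2)$ as well. The invertibility of $A^w$ and $B^w$ extends, via Lemma \ref{inverse_pseudo}, between arbitrary $\Gamma$-admissible weighted Sobolev spaces, so in particular $A^w B^w:H(m_1 m_2)\to L^2$ is an invertible element of $Op(m_1 m_2)$. Applying Lemma \ref{pseudo1} with this invertible operator and the commutator yields $\|[B^w,A^w]f\|_{L^2}\le C\|A^w B^w f\|_{L^2}$, whence $\|B^w A^w f\|_{L^2}\le C'\|A^w B^w f\|_{L^2}$. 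The reverse inequality follows by swapping the roles of $A^w$ and $B^w$, which gives the claimed equivalence.

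For part (2), the idea is a quadratic-form comparison. The hypothesis $m\lesssim m_1+m_2$ gives $m^2\lesssim m_1^2+m_2^2$; set $\tilde m:=(m_1^2+m_2^2)^{1/2}$, a $\Gamma$-admissible weight with $m\lesssim\tilde m$, and form
\[
P \;:=\; (B_1^w)^*B_1^w+(B_2^w)^*B_2^w \;\in\; Op(m_1^2+m_2^2).
\]
Invertibility of each $B_i^w$ forces, through Lemma \ref{inverse_pseudo}, the symbolic lower bound $|B_i|\gtrsim m_i$, so the Weyl principal symbol of $P$ is $\gtrsim \tilde m^2$. Constructing an invertible $\tilde m^w\in Op(\tilde m)$ by the same lemma and noting $A\in S(m)\subseteq S(\tilde m)$, Lemma \ref{pseudo1} gives $\|A^w f\|_{L^2}\le C\|\tilde m^w f\|_{L^2}$. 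A sharp G\aa{}rding / Fefferman--Phong argument then produces $\|\tilde m^w f\|_{L^2}^2\le C(Pf,f)_{L^2}=C(\|B_1^w f\|_{L^2}^2+\|B_2^w f\|_{L^2}^2)$, which after square-rooting is the claimed bound.

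The main technical obstacle I expect is in part (2): controlling the subprincipal Weyl corrections of $P$ so that the principal-symbol lower bound $\gtrsim\tilde m^2$ survives at the operator level as $(Pf,f)\gtrsim\|\tilde m^w f\|_{L^2}^2$. This is where the invertibility of each $B_i^w$ is essential, since it is precisely what yields the pointwise lower bound $|B_i|\gtrsim m_i$ needed to absorb the lower-order remainders into the principal term. Part (1) is expected to be routine by comparison: the commutator $[B^w,A^w]$ sits in the same symbol class $S(m_1 m_2)$ as the products, even under the flat metric $\Gamma=|dv|^2+|d\eta|^2$ that lacks a gain factor, so Lemma \ref{pseudo1} applies without any refined control.
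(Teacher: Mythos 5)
Your part (1) is correct but takes a slightly different route from the paper. You split $B^wA^w = A^wB^w + [B^w,A^w]$, place the commutator in $Op(m_1m_2)$, argue $A^wB^w:H(m_1m_2)\to L^2$ is an invertible element of $Op(m_1m_2)$, and then invoke Lemma~\ref{pseudo1} to bound the commutator term. This works. The paper is slightly slicker: it writes $B^wA^w = \big(B^wA^w(B^w)^{-1}\big)B^w$, notes that conjugation by the invertible $B^w$ keeps the symbol class, so $B^wA^w(B^w)^{-1}\in Op(m_1)$, and then applies Lemma~\ref{pseudo1} directly with $g = B^wf$. No commutator calculus needed, and no appeal to the invertibility of the product $A^wB^w$.

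For part (2), your route is much more complicated than necessary and contains a genuine gap. You assert that invertibility of each $B_i^w$ ``forces, through Lemma~\ref{inverse_pseudo}, the symbolic lower bound $|B_i|\gtrsim m_i$.'' Lemma~\ref{inverse_pseudo} runs the other way: the pointwise lower bound $|a_K|\gtrsim m_K$ is a \emph{hypothesis} that yields invertibility, not a conclusion drawn from it. Under the flat metric $\Gamma=|dv|^2+|d\eta|^2$ there is no gain in the Weyl composition expansion, so from $c\# B_i = 1$ with $c\in S(m_i^{-1})$ one cannot extract the pointwise statement $cB_i\approx 1$, and hence not $|B_i|\gtrsim m_i$. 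Without that lower bound, the subsequent sharp G\aa rding / Fefferman--Phong estimate for $P=(B_1^w)^*B_1^w+(B_2^w)^*B_2^w$ has no principal positivity to lean on, and the details of absorbing the lower-order remainders are not supplied. The paper's proof avoids all of this: invertibility of $A^w$, $B_1^w$, $B_2^w$ means $\|A^wf\|_{L^2}=\|f\|_{H(m)}$, $\|B_i^wf\|_{L^2}=\|f\|_{H(m_i)}$ (as equivalent Hilbertian structures on the weighted Sobolev spaces). By the definition \eqref{sobolev_space}, the $H(M)$ norm is a weighted $L^2$ norm over the partition-of-unity localization with weight $M^2$, so $m\lesssim m_1+m_2$ gives $m^2\lesssim m_1^2+m_2^2$ and hence $\|f\|_{H(m)}\lesssim\|f\|_{H(m_1)}+\|f\|_{H(m_2)}$ at once. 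No quadratic-form argument, no G\aa rding inequality, and no symbolic lower bound are required.
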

\begin{proof}
	The first assertion is easily follows from Lemma \ref{pseudo1}. That is, for $f\in\S$, \begin{align*}
		\|B^wA^wf\|_{L^2} = \|\underbrace{B^wA^w(B^w)^{-1}}_{\in Op(m_1)}B^wf\|_{L^2} \lesssim \|A^wB^wf\|_{L^2}.
	\end{align*}The contrary is similar. 
For the second assertion, we use the weighted Sobolev space \eqref{sobolev_space}. Since $A^w$, $B_1^w$, $B_2^w$ are invertible, we can write $\|f\|_{H(m)} = \|A^wf\|_{L^2}$, $\|f\|_{H(m_1)} = \|B_1^wf\|_{L^2}$, $\|f\|_{H(m_2)} = \|B_2^wf\|_{L^2}$. Thus, by $m\lesssim m_1+m_2$, 
\begin{align*}
	\|A^wf\|_{L^2} &= \|f\|_{H(m)}\\
	&\lesssim \|f\|_{H(m_1)} + \|f\|_{H(m_2)}\\
	&\lesssim \|B_1^wf\|_{L^2} + \|B_2^wf\|_{L^2}.
\end{align*}
\end{proof}
For a direct application, we have that for $k,l\in\R$, $f\in\S$  
\begin{align}\label{eq80}
	\|(\<v\>^k\<\eta\>^l)^wf\|_{L^2_v} = \|\<v\>^k\<D_v\>^lf\|_{L^2}.
\end{align}

\paragraph{Estimate of $\Gamma(g,f)$}
Denote $\widehat{f}$ to be the Fourier transform of $f$ on $x$. By \eqref{estiofGamma0}, we have for $m>\frac{d}{2}$, $f,g,h\in\S$ that 
\begin{align*}
	&\quad\ \Big|\Big(\big(\<v\>^{2l}\<D_x\>^m\T(g,f,\partial^\alpha_v(\mu^{1/2})),\<D_x\>^mh\big)_{L^2_{x,v}}\Big|\\
	&= \Big|\Big(\<v\>^{2l}\<y\>^m\int\T(\widehat{g}(y-z),\widehat{f}(z),\partial^\alpha_v(\mu^{1/2}))\,dz,\<y\>^m\widehat{h}\Big)_{L^2_{y,v}}\Big|\\
	&\le  \int\int\<y\>^{m}\Big|\Big(\<v\>^{2l}\T(\widehat{g}(y-z),\widehat{f}(z),\partial^\alpha_v(\mu^{1/2})),\<y\>^{m}\widehat{h}(y)\Big)_{L^2_{v}}\Big|\,dzdy\\
	&\le C_m\int\int\|\<v\>^l\<y-z\>^m\widehat{g}(y-z)\|_{L^2_{v}}|\widehat{f}(z)|_{N^{s,\gamma}_l}|\<y\>^m\widehat{h}(y)|_{N^{s,\gamma}_l}\,dzdy\\
	&\qquad+ C_m\int\int\|\<v\>^l\widehat{g}(y-z)\|_{L^2_{v}}|\<z\>^m\widehat{f}(z)|_{N^{s,\gamma}_l}|\<y\>^m\widehat{h}(y)|_{N^{s,\gamma}_l}\,dzdy\\
	&\le C_m\|\<v\>^l\<D_x\>^mg\|_{L^2_{x,v}}\big\||\widehat{f}(y)|_{N^{s,\gamma}_l}\big\|_{L^1_y}\|\<D_x\>^m\widehat{h}\|_{N^{s,\gamma}_l}\\
	&\qquad + C_m\big\|\|\<v\>^l\widehat{g}(y)\|_{L^2_{v}}\big\|_{L^1_y}\|\<D_x\>^mf\|_{N^{s,\gamma}_l}\|\<D_x\>^m{h}\|_{N^{s,\gamma}_l},
\end{align*}by H\"{o}lder's inequality and Fubini's theorem. Notice that $\|\widehat{g}\|_{L^1_y}\le C\|\<y\>^m\widehat{g}\|_{L^2_y}$, since $m>d/2$. 
\begin{align}
	\Big|\big(\<v\>^{2l}&\T(g,f,\partial^\alpha_v(\mu^{1/2})),h\big)_{L^2_vH^m_x}\Big|
	\le C_m\|\<D_x\>^m\<v\>^lf\|_{L^2_{x,v}}\|\<D_x\>^mg\|_{N^{s,\gamma}_l}\|\<D_x\>^mh\|_{N^{s,\gamma}_l}.\label{estiofGamma}
\end{align}

\section{Regularity on $x,v$}
In this section, we will complete the iteration on linear equation \eqref{lineareq} and prove the regularity of the Boltzmann equation on $x$ and $v$. As the beginning, we will analyze the regularity estimate of mollified linear equation \eqref{lineareqvarepsilon}. 
\begin{Thm}\label{Thm31}Let $T\in(0,1]$, $m_0>\frac{d}{2}$. Assume $g$ satisfies \eqref{assumong}. Let $f$ be the solution to equation \eqref{lineareqvarepsilon}. We can choose $\varepsilon_0$ in \eqref{initial1} so small that 
	\begin{equation}\label{eq0}\begin{split}
			\sup_{0\le t\le T}\|f(t)\|_{L^2_vH^{m_0}_x}^2 +2 \varepsilon\int_{0}^{T}\|Mf\|^2_{L^2_vH^{m_0}_x}\,dt  + {\nu_0}\int_{0}^{T}\|(a^{1/2})^wf\|^2_{L^2_vH^{m_0}_x}\,dt
			&\le C\varepsilon_0^2,
		\end{split}
	\end{equation}
\end{Thm}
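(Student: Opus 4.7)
The plan is to perform a direct energy estimate in $L^2_vH^{m_0}_x$ on the mollified equation \eqref{lineareqvarepsilon}. Since the initial datum $f_{0,\varepsilon}\in\S$ and the regularizing operator $\varepsilon M^*M$ is present, the solution $f_\varepsilon$ is sufficiently smooth to be tested freely. First, I apply $\<D_x\>^{m_0}$ to both sides of \eqref{lineareqvarepsilon} and pair with $\<D_x\>^{m_0}f$ in $L^2_{x,v}$. The transport term $v\cdot\nabla_x$ vanishes by skew-symmetry on $L^2_{x,v}$ and commutes with $\<D_x\>^{m_0}$, while the viscosity term yields $\varepsilon\|Mf\|^2_{L^2_vH^{m_0}_x}$ since $\<D_x\>^{m_0}$ commutes with every factor of $M=\<v\>^N\<D_v\>^N\<D_x\>^N$. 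This gives
\begin{equation*}
\frac{1}{2}\frac{d}{dt}\|f\|^2_{L^2_vH^{m_0}_x}+\varepsilon\|Mf\|^2_{L^2_vH^{m_0}_x}=\big(Lf,f\big)_{L^2_vH^{m_0}_x}+\big(\Gamma(g,f),f\big)_{L^2_vH^{m_0}_x}.
\end{equation*}

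Next, I estimate the right-hand side. For the linear term, the coercivity inequality \eqref{estimateofL} with $l=0$, together with the equivalence of norms \eqref{equivalent_norm} and the fact that $\<D_x\>^{m_0}$ commutes with $(a^{1/2})^w$ (since the symbol $a$ depends only on $(v,\eta)$), produces
\begin{equation*}
\big(Lf,f\big)_{L^2_vH^{m_0}_x}\le -\nu_0\|(a^{1/2})^wf\|^2_{L^2_vH^{m_0}_x}+C\|f\|^2_{L^2_vH^{m_0}_x}.
\end{equation*}
For the collision term, I apply the trilinear bound \eqref{estiofGamma} with $l=0$ (noting $\Gamma=\T(\cdot,\cdot,\mu)$) combined with $\|\cdot\|_{N^{s,\gamma}}\approx\|(a^{1/2})^w\cdot\|_{L^2_v}$, placing $g$ in the low-regularity slot to obtain
\begin{equation*}
\big|\big(\Gamma(g,f),f\big)_{L^2_vH^{m_0}_x}\big|\le C\|\<D_x\>^{m_0}g\|_{L^2_{x,v}}\|(a^{1/2})^wf\|^2_{L^2_vH^{m_0}_x}.
\end{equation*}
The hypothesis \eqref{assumong} gives $\|\<D_x\>^{m_0}g\|_{L^2_{x,v}}\le C\varepsilon_0$, so choosing $\varepsilon_0$ small enough that $C\cdot C\varepsilon_0\le \nu_0/2$ absorbs the nonlinear contribution into the coercive term on the right.

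Combining these bounds yields the differential inequality
\begin{equation*}
\frac{d}{dt}\|f\|^2_{L^2_vH^{m_0}_x}+2\varepsilon\|Mf\|^2_{L^2_vH^{m_0}_x}+\nu_0\|(a^{1/2})^wf\|^2_{L^2_vH^{m_0}_x}\le 2C\|f\|^2_{L^2_vH^{m_0}_x}.
\end{equation*}
Integrating over $[0,T]$ with $T\le 1$ and applying Gr\"onwall, together with $\|f_{0,\varepsilon}\|_{L^2_vH^{m_0}_x}\le C\|f_0\|_X\le C\varepsilon_0$, gives \eqref{eq0} with a constant independent of $\varepsilon$. The main subtlety is to use \eqref{estiofGamma} in the orientation that puts $g$ in the $L^2_{x,v}$ slot rather than in the coercive norm $N^{s,\gamma}$, since only the $L^2_vH^{m_0}_x$ norm of $g$ is controlled by the iteration hypothesis \eqref{assumong} (the higher $\H$-regularity used in Theorem \ref{Thm1} is precisely what we are attempting to construct). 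The bookkeeping between the $\varepsilon M^*M$ test, the pseudo-differential action of $L$, and the commutation with $\<D_x\>^{m_0}$ is routine because $M^*M$ and $(a^{1/2})^w$ both act trivially on the $x$-variable.
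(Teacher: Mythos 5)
Your proposal is correct and follows essentially the same route as the paper's proof: a direct $L^2_vH^{m_0}_x$ energy estimate, coercivity of $L$ via \eqref{estimateofL} and \eqref{equivalent_norm}, the trilinear bound \eqref{estiofGamma} with $g$ in the $L^2_{x,v}$ slot and $f$ in the $N^{s,\gamma}$ slots, absorption of the nonlinear term using the smallness of $\varepsilon_0$, and Gr\"onwall. The only differences are expository: you make explicit the commutation of $\<D_x\>^{m_0}$ with $M$, $L$, $(a^{1/2})^w$, and the transport term, which the paper takes for granted.
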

\begin{proof}
Let $t\in (0,T]$. We write $f$ to be the solution to equation \eqref{lineareqvarepsilon}. Then 
\begin{align*}
\big(\partial_tf+\varepsilon M^*Mf+v\cdot\nabla_xf,f\big)_{L^2_vH^{m_0}_x}
&= \big(Lf+\Gamma(g,f), f\big)_{L^2_vH^{m_0}_x}.
\end{align*}
Taking the real part, applying the coercive estimate \eqref{estimateofL} of $L$, the equivalence \eqref{equivalent_norm}, and estimate \eqref{estiofGamma} of $\Gamma$, 
\begin{align*}
&\quad\,\frac{1}{2}\partial_t\|f\|_{L^2_vH^{m_0}_x}^2 + \varepsilon\| Mf\|^2_{L^2_vH^{m_0}_x}  + \nu_0\|(a^{1/2})^wf\|^2_{L^2_vH^{m_0}_x}\\
&\le C\|f\|^2_{L^2_vH^{m_0}_x}+C\|g\|_{L^2_vH^{m_0}_x}\|(a^{1/2})^wf\|^2_{L^2_vH^{m_0}_x},
\end{align*}
Since $\sup_{0\le t\le T}\|g\|_{L^2_vH^{m_0}_x}<\sqrt{2}\varepsilon_0$, we choose $\varepsilon_0<<1$ that $\sqrt{2}C\varepsilon_0<\frac{\nu_0}{2}$. 
\begin{align*}
\frac{1}{2}\partial_t\|f\|_{L^2_vH^{m_0}_x}^2 + \varepsilon\| Mf\|^2_{L^2_vH^{m_0}_x}  + \frac{\nu_0}{2}\|(a^{1/2})^wf\|^2_{L^2_vH^{m_0}_x}
&\le C\|f\|^2_{L^2_vH^{m_0}_x}.
\end{align*}
Applying Gronwall's inequality, 
\begin{equation*}\begin{split}
&\quad\,\sup_{0\le t\le T}\|f(t)\|_{L^2_vH^{m_0}_x}^2 +2 \varepsilon\int_{0}^{T}\|Mf\|^2_{L^2_vH^{m_0}_x}\,dt  + {\nu_0}\int_{0}^{T}\|(a^{1/2})^wf\|^2_{L^2_vH^{m_0}_x}\,dt\\
&\le e^{2CT}\|f(0)\|^2_{L^2_vH^{m_0}_x}\\
&\le C\varepsilon_0^2,
	\end{split}
\end{equation*}where the constant $C$ is independent of time $T$, since $T\le 1$. 

\end{proof}

For higher order derivative, we will need to mollify $f$ with $t^{\kappa(m+k)}$ in order to eliminate the influence of initial data. So we have the following Theorem.

\begin{Thm}
Let $T\in(0,1]$, $m_0>\frac{d}{2}$. Assume $g$ satisfies \eqref{assumong}.	Let $f$ be the solution to equation \eqref{lineareqvarepsilon}.
	Let $m,k,l\in\N$ to be the index for derivative of $x,v$ and weight $\<v\>$ respectively. For sufficiently large $\kappa>1$ and small $\varepsilon_0$, the following estimate are valid. 
	For derivative of $v$, 
	\begin{align}\label{20}
		\notag&\quad\,\partial_t\|t^{\kappa k}f\|_{H^k_vH^{m_0}_x}^2 
		+ \varepsilon\|Mt^{\kappa k}f\|_{H^k_vH^{m_0}_x}^2
		+\nu_0\|(a^{1/2})^wt^{\kappa k}f\|_{H^k_vH^{m_0}_x}^2\\
		&\le C_{\kappa,k}\big(\varepsilon\|Mf\|^2_{L^2_vH^{m_0}_x} + \|(a^{1/2})^wf\|^2_{L^2_vH^{m_0}_x} + \|t^{\kappa k}\<D_x\>^kf\|^2_{L^2_vH^{m_0}_x}\big).
	\end{align}
For derivative of $x$, 
\begin{align}\label{21}
	\notag&\quad\,\partial_t\|t^{\kappa m}f\|_{L^2_vH^{m+m_0}_x}^2 
	+ \varepsilon\|Mt^{\kappa m}f\|_{L^2_vH^{m+m_0}_x}^2
	+\nu_0\|(a^{1/2})^wt^{\kappa m}f\|_{L^2_vH^{m+m_0}_x}^2\\
	&\le \delta\|(b^{1/2})^wt^{\kappa m}f\|^2_{L^2_vH^{m+m_0}_x} + C_{\delta}\|(a^{1/2})^wf\|^2_{L^2_vH^{m_0}_x}.
\end{align}
%For derivative of $x$ and weight $\<v\>$, 
%\begin{align}\label{22}
%	\notag&\quad\,\partial_t\|t^{\kappa m}\<v\>^lf\|_{L^2_vH^{m+m_0}_x}^2 
%	+ \varepsilon\|Mt^{\kappa m}\<v\>^lf\|_{L^2_vH^{m+m_0}_x}^2
%	+\nu_0\|(a^{1/2})^wt^{\kappa m}\<v\>^lf\|_{L^2_vH^{m+m_0}_x}^2\\
%	&\le C_{\kappa,m}\big(\|f\|^2_{L^2_vH^{m_0}_x}+\varepsilon\|t^{\kappa m}Mf\big\|^2_{L^2_vH^{m+m_0}_x}+\|t^{\kappa m}(a^{1/2})^wf\|^2_{L^2H^{m+m_0}_x}+\|(a^{1/2})^w\<v\>^lf\|^2_{L^2_vH^{m_0}_x}\big).
%\end{align}
For the weight $\<v\>$, 
\begin{align}
	\label{24}
	\sup_{0\le t\le T}\|\<v\>^lf\|_{L^2_vH^{m_0}_x}^2 
	+ \varepsilon\int^T_0\|M\<v\>^lf\|_{L^2_vH^{m_0}_x}^2\,dt
	+\nu_0\int^T_0\|\<D_x\>^{m_0}f\|_{N^{s,\gamma}_l}^2\,dt
	&\le C\varepsilon_0^2.
\end{align}
\end{Thm}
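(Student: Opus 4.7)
The plan is to derive each of the three estimates \eqref{20}, \eqref{21}, \eqref{24} by the same energy-method template: apply an appropriate derivative and/or weight operator to the mollified equation \eqref{lineareqvarepsilon}, take the $L^2_v H^{m_0}_x$ inner product against the corresponding weighted test function, take real parts, then control the transport, mollifier, linearized, and nonlinear contributions using the coercive estimate \eqref{estimateofL}, the equivalence \eqref{equivalent_norm}, the trilinear bound \eqref{estiofGamma}, and the pseudodifferential calculus from Section \ref{sec2}. Throughout, the smallness $\|g\|_{L^2_vH^{m_0}_x}\le C\varepsilon_0$ in \eqref{assumong} is used to absorb the $\Gamma(g,f)$ contribution into the coercive dissipation $\nu_0\|(a^{1/2})^w(\cdot)\|^2$.

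For the velocity regularity \eqref{20}, I would apply $\<D_v\>^k\<D_x\>^{m_0}$ to \eqref{lineareqvarepsilon}, then test against $t^{2\kappa k}\<D_v\>^k\<D_x\>^{m_0}f$. The linearized and nonlinear parts directly produce $\nu_0\|(a^{1/2})^wt^{\kappa k}f\|^2_{H^k_vH^{m_0}_x}$ (after the $\Gamma$ term is absorbed), and the mollifier produces $\varepsilon\|Mt^{\kappa k}f\|^2_{H^k_vH^{m_0}_x}$ up to a commutator $[\<D_v\>^k\<D_x\>^{m_0},M^*M]$ whose contribution is dominated by $C_k\varepsilon\|Mf\|^2_{L^2_vH^{m_0}_x}$ via the symbolic calculus. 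The transport term yields the commutator $[\<D_v\>^k,v\cdot\nabla_x]=[\<D_v\>^k,v]\cdot\nabla_x$, whose Weyl symbol is of order $\<\eta\>^{k-1}|y|$; by Corollary \ref{pseudobound}(2) and Young's inequality this is bounded by $\delta\|\<D_v\>^k(\cdot)\|^2+C_\delta\|\<D_x\>^k(\cdot)\|^2$, which explains the $\|t^{\kappa k}\<D_x\>^kf\|^2$ term on the right of \eqref{20}. Finally, the time derivative of $t^{2\kappa k}$ produces a factor $2\kappa k\,t^{2\kappa k-1}\|\<D_v\>^k\<D_x\>^{m_0}f\|^2$; interpolating $\<D_v\>^k$ between $\<D_v\>^N$ (controlled by $M$) and $(a^{1/2})^w$ pushes this into $C_{\kappa,k}(\varepsilon\|Mf\|^2+\|(a^{1/2})^wf\|^2)$ on the right.

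The spatial regularity \eqref{21} is the delicate one, because $\<D_x\>^m$ commutes with $v\cdot\nabla_x$ and so the naive test gives no $x$-smoothing. Following the sketch in the introduction, the hypoelliptic gain is extracted by conjugating with $\theta^w$, where $\theta$ is the symbol defined before the theorem. Since $\theta$ is independent of $x$ and $v\cdot\nabla_x$ has Weyl symbol $iv\cdot y$, the Poisson bracket reduces to $\{iv\cdot y,\theta\}=-iy\cdot\partial_\eta\theta$, and a direct computation using the structure of $\theta$ shows that $-iy\cdot\partial_\eta\theta$ is comparable to $b(v,y)\chi(v,\eta)$ modulo lower order. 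Applying $\<D_x\>^{m+m_0}$, testing against $t^{2\kappa m}\<D_x\>^{m+m_0}\theta^w f$ (or, equivalently, pairing the equation with $(\theta^w)^*$ against the derivative), the contribution $([v\cdot\nabla_x,\theta^w]\<D_x\>^{m+m_0}t^{\kappa m}f,\<D_x\>^{m+m_0}t^{\kappa m}f)$ generates the extra $\|(b^{1/2})^wt^{\kappa m}f\|^2_{L^2_vH^{m+m_0}_x}$ term, with a small coefficient $\delta$ arising from how the Poisson bracket identity is inserted; the remaining commutators and $\Gamma$, $L$ terms are bounded by $C_\delta\|(a^{1/2})^wf\|^2_{L^2_vH^{m_0}_x}$ using Lemma \ref{pseudo1}, \eqref{estiofGamma}, and the smallness of $g$.

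For the weighted estimate \eqref{24}, I would multiply \eqref{lineareqvarepsilon} by $\<v\>^{2l}f$ and integrate in $x,v$ using $\<D_x\>^{m_0}$ in the $x$-direction. Since $\<v\>^l$ depends only on $v$, it commutes with $v\cdot\nabla_x$, so the transport term vanishes after integration by parts. The weighted coercive estimate \eqref{estimateofL} gives $\nu_0\|\<D_x\>^{m_0}f\|^2_{N^{s,\gamma}_l}$ on the left, while \eqref{estiofGamma} with $l$ handles $\Gamma(g,f)$ and is absorbed by $\nu_0$ using $\|g\|_{L^2_vH^{m_0}_x}\le C\varepsilon_0$. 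The mollifier $\varepsilon M^*M$ produces $\varepsilon\|M\<v\>^lf\|^2_{L^2_vH^{m_0}_x}$ plus a commutator $[\<v\>^l,M^*M]$ of strictly lower weighted order, controlled by $\varepsilon\|Mf\|^2$. Closing via Gronwall on $[0,T]\subset[0,1]$ gives \eqref{24}. The main obstacle across the proof is ensuring that all commutators with $M^*M$ produce only errors absorbable in the stated form, and in particular that the Poisson bracket computation in \eqref{21} gives a clean positivity of order $b$ and not merely $b^{1/2}$, so that the $\delta$-factor on the right-hand side can indeed be made arbitrarily small; this will be carried out at the symbol level using the admissibility of $b$ under $\Gamma$.
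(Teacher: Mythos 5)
The central gap is in your treatment of the spatial estimate \eqref{21}. You propose to derive it by conjugating with $\theta^w$ and harvesting the Poisson bracket $\{v\cdot y,\theta\}$; but that is the content of the \emph{next} theorem (Theorem~\ref{Thm33}), whose conclusion puts $\int_0^T\|t^{\kappa m}(b^{1/2})^wf\|^2\,dt$ on the \emph{left} as a gain with an order-one coefficient. The inequality \eqref{21} is of the opposite type: the term $\delta\|(b^{1/2})^wt^{\kappa m}f\|^2_{L^2_vH^{m+m_0}_x}$ sits on the \emph{right} as an error with an arbitrarily small coefficient $\delta$, to be absorbed later when \eqref{21} and the Poisson-bracket estimate are added after multiplying \eqref{21} by a large $C_0$. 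The paper proves \eqref{21} by a plain weighted energy estimate (test \eqref{lineareqvarepsilon} against $(-\partial_x)^\alpha\<D_x\>^{2m_0}t^{2\kappa m}\partial_x^\alpha f$), and the $\delta\|(b^{1/2})^w\ldots\|^2$ term is generated not by $\theta^w$ but by the time-derivative of the weight: one writes $t^{\kappa m-\frac{1}{2}}\lesssim\delta\, b^{1/2}t^{\kappa m}+C_\delta\<y\>^{-m}$ by Young's inequality and then peels off $b^{1/2}$ with Corollary~\ref{pseudobound}. Your route would not produce the inequality as stated.

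There is a second, quantitative problem in your handling of the $\partial_t(t^{2\kappa k})$ contribution to \eqref{20}. You interpolate $\<D_v\>^k$ between $\<D_v\>^N$ (controlled by $M$) and $(a^{1/2})^w$ and claim the output is $C_{\kappa,k}(\varepsilon\|Mf\|^2+\|(a^{1/2})^wf\|^2)$. An interpolation-plus-Young step can only give $\eta\|Mf\|^2+C_\eta\|(a^{1/2})^wf\|^2$ with $\eta>0$ fixed \emph{independently} of $\varepsilon$; there is no mechanism to manufacture the prefactor $\varepsilon$ on $\|Mf\|^2$. This matters, because the iteration in the proof of Theorem~\ref{Main} controls $\varepsilon\int\|Mf\|^2\,dt\le C\varepsilon_0^2$ but not $\int\|Mf\|^2\,dt$, so an unweighted $\|Mf\|^2$ on the right of \eqref{20} would break the scheme. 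The paper avoids this by bounding the time-weight term without ever invoking $M$: it uses $t^{\kappa(m+k)-\frac{1}{2}}\lesssim\delta\, a^{1/2}t^{\kappa(m+k)}+C_\delta\<v\>^{-l}\<\eta\>^{-k}$, cancelling the $\<v\>^l\<\eta\>^k$ weights in the norm, so the $\varepsilon\|Mf\|^2$ on the right of \eqref{20} arises solely from the $[M^*M,\<v\>^l\partial^\beta_v]$ commutator, which carries the factor $\varepsilon$ from the outset. Your approach to \eqref{24} is essentially the paper's.
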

\begin{proof}Take a large constant $\kappa>1$ to be chosen later. We mollify $f$ with $t^{\kappa (m+k)}$. Then 
\begin{align*}
\partial_t(t^{\kappa (m+k)} f) &= (t^{\kappa (m+k)})_tf+t^{\kappa (m+k)} f_t\\&
= (t^{\kappa (m+k)})_tf+t^{\kappa (m+k)}(-\varepsilon M^*Mf-v\cdot\nabla_xf+Lf+\Gamma(g,f)).
\end{align*}
Take muti-index $|\alpha|\le m$, $|\beta|\le k$. We use $(-\partial_x)^\alpha(-\partial_v)^\beta\<v\>^{2l}\<D_x\>^{2m_0}t^{\kappa (m+k)}\partial^\alpha_x\partial^\beta_v f$ as the test function. Notice that $\<D_x\>$ commutes with $M$, $v\cdot\nabla_x$ and $L$. Thus when $m+k>0$, 
\begin{equation}\label{eq4}\begin{split}
	&\quad\,\Big(\partial_t(t^{\kappa (m+k)}\<v\>^l\<D_x\>^{m_0}\partial^\alpha_x\partial^\beta_v f),t^{\kappa (m+k)}\<v\>^l\<D_x\>^{m_0}\partial^\alpha_x\partial^\beta_v f\Big)_{L^2}\\
	&= \Big(C_{\kappa,m,k}t^{\kappa (m+k)-1}\<v\>^l\<D_x\>^{m_0}\partial^\alpha_x\partial^\beta_vf\\
	&\qquad
	-\varepsilon t^{\kappa (m+k)}M^*M\<v\>^l\<D_x\>^{m_0}\partial^\alpha_x\partial^\beta_vf+t^{\kappa (m+k)}[M^*M,\<v\>^l\partial^\beta_v]\<D_x\>^{m_0}\partial^\alpha_xf\\
	&\qquad
	-t^{\kappa (m+k)}v\cdot\nabla_x\<v\>^l\<D_x\>^{m_0}\partial^\alpha_x\partial^\beta_vf
	+t^{\kappa (m+k)}\<v\>^l[v\cdot\nabla_x,\partial^\beta_v]\<D_x\>^{m_0}\partial^\alpha_xf\\
	&\qquad +t^{\kappa (m+k)}L\<v\>^l\partial^\alpha_x\partial^\beta_v\<D_x\>^{m_0}f -t^{\kappa (m+k)}[L,\<v\>^l\partial^\beta_v]\partial^\alpha_x\<D_x\>^{m_0}f\\
	&\qquad+t^{\kappa (m+k)}\<v\>^l\<D_x\>^{m_0}\partial^\alpha_x\partial^\beta_v\Gamma(g,f), t^{\kappa (m+k)}\<v\>^l\<D_x\>^{m_0}\partial^\alpha_x\partial^\beta_vf\Big)_{L^2},
\end{split}
\end{equation}
Notice that $\Re(v\cdot\nabla_xh,h)_{L^2}=0$ for sufficiently smooth $h$. Taking the real part and the summation on $|\alpha|\le m$, $|\beta|\le k$, we have
\begin{align}
\frac{1}{2}\partial_t\big\|t^{\kappa (m+k)}\<v\>^lf\big\|_{H^k_vH^{m+m_0}_x}^2 
+ \varepsilon\big\|Mt^{\kappa (m+k)}\<v\>^lf\big\|_{H^k_vH^{m+m_0}_x}^2
&\le C_{\kappa,m,k}(J_1+J_2+J_3+J_4+J_5),\label{eq6}
\end{align}
where
\begin{align*}
J_1 &= \big\|t^{\kappa (m+k)-\frac{1}{2}}\<v\>^lf\big\|^2_{H^k_vH^{m+m_0}_x},\\
J_2 &= \varepsilon\sum_{|\alpha|\le m,|\beta|\le k}\Re\Big(t^{\kappa (m+k)}[M^*M,\<v\>^l\partial^\beta_v]\partial^\alpha_xf,t^{\kappa (m+k)}\<v\>^l\partial^\alpha_x\partial^\beta_vf\Big)_{L^2_vH^{m_0}_x}\\
J_3 &= \sum_{|\alpha|\le m,|\beta|\le k}\Re\Big(t^{\kappa (m+k)}\<v\>^l[v\cdot\nabla_x,\partial^\beta_v]\partial^\alpha_xf, t^{\kappa (m+k)}\<v\>^l\partial^\alpha_x\partial^\beta_vf\Big)_{L^2_vH^{m_0}_x}\\
J_4 &= \sum_{|\alpha|\le m,|\beta|\le k}\Re\Big(t^{\kappa (m+k)}L\<v\>^l\partial^\alpha_x\partial^\beta_vf+t^{\kappa (m+k)}[L,\<v\>^l\partial^\beta_v]\partial^\alpha_xf, t^{\kappa (m+k)}\<v\>^l\partial^\alpha_x\partial^\beta_vf\Big)_{L^2_vH^{m_0}_x}\\
J_5 &= \sum_{|\alpha|\le m,|\beta|\le k}\Re\Big(t^{\kappa (m+k)}\<v\>^l\partial^\alpha_x\partial^\beta_v\Gamma(g,f), t^{\kappa (m+k)}\<v\>^l\partial^\alpha_x\partial^\beta_vf\Big)_{L^2_vH^{m_0}_x}.
\end{align*}
In the case of $k=m=0$, the only different term in \eqref{eq4} is the first term, in whose case $(t^{\kappa (m+k)})_t\<v\>^l\<D_x\>^{m_0}f = 0$, and hence we set \begin{align}\label{06}
J_1=0
\end{align} when $k=m=0$.
If $k+m>0$, for $\delta>0$ and sufficiently large $\kappa>>1$, by Young's inequality and \eqref{eq09},
\begin{align*}
t^{\kappa (m+k)-\frac{1}{2}} &\lesssim \big((\delta a^{1/2})^{\frac{\kappa (m+k)-\frac{1}{2}}{\kappa (m+k)}} t^{\kappa (m+k)-\frac{1}{2}}\big)^{\frac{\kappa (m+k)}{\kappa (m+k)-\frac{1}{2}}} + \big((\delta a^{1/2})^{-\frac{\kappa (m+k)-\frac{1}{2}}{\kappa (m+k)}}\big)^{2\kappa (m+k)},\\
&\lesssim \delta a^{1/2}t^{\kappa (m+k)} + C_\delta\<v\>^{-l}\<\eta\>^{-k},\\
t^{\kappa (m+k)-\frac{1}{2}} &\lesssim \big((\delta b^{1/2})^{\frac{\kappa (m+k)-\frac{1}{2}}{\kappa (m+k)}} t^{\kappa (m+k)-\frac{1}{2}}\big)^{\frac{\kappa (m+k)}{\kappa (m+k)-\frac{1}{2}}} + \big((\delta b^{1/2})^{-\frac{\kappa (m+k)-\frac{1}{2}}{\kappa (m+k)}}\big)^{2\kappa (m+k)},\\
&\lesssim \delta b^{1/2}t^{\kappa (m+k)} + C_\delta\<v\>^{-l}\<y\>^{-m}.
\end{align*} Thus $t^{\kappa (m+k)-\frac{1}{2}}\in S(\delta a^{1/2}t^{\kappa (m+k)} + C_\delta\<v\>^{-l}\<\eta\>^{-k})\cap S(\delta b^{1/2}t^{\kappa (m+k)} + C_\delta\<v\>^{-l}\<y\>^{-m})$ uniformly in $\delta, t$. 
To apply the Corollary \ref{pseudobound}, we need to check that $\delta (a^{1/2})^wt^{\kappa (m+k)}$, $C_\delta\<v\>^{-l}\<D_v\>^{-k}$ and $\delta (b^{1/2})^wt^{\kappa (m+k)}$, $C_\delta\<v\>^{-l}\<D_x\>^{-m}$ are invertible pseudodifferential operator and their seminorms are uniform in $t$ and $\delta$. The symbol of $b^w$ only depends on $v$ and $y$ and is invertible since it is multiplication on $v$ and is multiplier on $x$. $(a^{1/2})^w$ is invertible by our choice of $K_0$ in \eqref{Defofa}. Also $\<v\>^{-l}\<D_v\>^{-k}$ and $\<v\>^{-l}\<D_x\>^{-m}$ are obviously invertible and their inverse are $\<D_v\>^k\<v\>^l$ and $\<D_x\>^m\<v\>^l$ respectively. By Corollary \ref{pseudobound}, we have 
\begin{equation}\label{07}\begin{split}
	J_1 &\le \min\big\{\delta\big\|(a^{1/2})^wt^{\kappa(m+k)}\<v\>^lf\big\|^2_{H^k_vH^{m+m_0}_x} + C_\delta\big\|f\big\|^2_{L^2_vH^{m+m_0}_x},\\&\qquad\qquad \delta\big\|(b^{1/2})^wt^{\kappa(m+k)}\<v\>^lf\big\|^2_{H^k_vH^{m+m_0}_x} + C_\delta\big\|f\big\|^2_{H^k_vH^{m_0}_x}\big\}.
\end{split}
\end{equation}

For $J_2$, if $l=k=0$, then 
\begin{align}
	J_2=0. \label{08}
\end{align}If $l+k>0$, noticing that $M=\<v\>^N\<D_v\>^N\<D_x\>^N$ is invertible and $M^*\in S(\<v\>^N\<\eta\>^N\<y\>^N)$, by the estimate of commutator in appendix, we have 
\begin{align*}
[M^*M,\<v\>^l\partial^\beta_v]\in Op(\<v\>^{2N+l-1}\<\eta\>^{2N+k-1}\<y\>^{2N}).
\end{align*}
Thus Lemma \ref{pseudo1} gives that 
\begin{align}\notag
J_2 &\le \varepsilon\sum_{|\alpha|\le m,|\beta|\le k}\big|\big(t^{\kappa (m+k)}\underbrace{\<v\>^{\frac{1}{2}}\<D_v\>^{\frac{1}{2}}(M^*)^{-1}[M^*M,\<v\>^l\partial^\beta_v]\partial^\alpha_x}_{\in Op(\<v\>^{N+l-1/2}\<\eta\>^{N+k-1/2}\<y\>^{N+m})}f,\\
\notag&\qquad\qquad\qquad\qquad t^{\kappa (m+k)}\underbrace{\<v\>^{-\frac{1}{2}}\<D_v\>^{-\frac{1}{2}}M\<v\>^l\partial^\alpha_x\partial^\beta_v}_{\in Op(\<v\>^{N+l-1/2}\<\eta\>^{N+k-1/2}\<y\>^{N+m})}f\big)_{L^2_vH^{m_0}_x}\big|\\
\notag&\le \varepsilon\big\|t^{\kappa (m+k)}\big(\<v\>^{l-1/2}\<\eta\>^{k-1/2}\big)^wMf\big\|^2_{L^2_vH^{m+m_0}_x}\\
&\le \varepsilon\big(\delta\|t^{\kappa (m+k)}\<v\>^{l}\<D_v\>^kMf\big\|^2_{L^2_vH^{m+m_0}_x}
+C_\delta\|t^{\kappa m}Mf\big\|^2_{L^2_vH^{m+m_0}_x}\big),\label{09}
\end{align}where the last inequality follows from  Corollary \ref{pseudobound}, $\<v\>^{l-1/2}\<\eta\>^{k-1/2}\in S(\delta\<v\>^{l}\<\eta\>^{k}+C_\delta)$ and $t\le 1$.

For $J_3$, if $k=0$, then \begin{align}
	J_3=0.\label{010}
\end{align} If $k\neq 0$, noticing that \begin{align*}
[v\cdot\nabla_x,\partial^\beta_v]\in Op(\<\eta\>^{k-1}\<y\>),
\end{align*}
and
\begin{align*}
\|\<D_v\>^{k-1}\<D_x\>h\|_{L^2} &= \|\<\eta\>^{k-1}\<y\>h\|_{L^2}\\
&\le \delta\|\<D_v\>^kh\|_{L^2} + C_\delta\|\<D_x\>^kh\|_{L^2},
\end{align*}then we know
\begin{align}\label{011}
\notag J_3 &\le \sum_{|\alpha|\le m,|\beta|\le k}\big\|t^{\kappa (m+k)}\underbrace{\<v\>^l[v\cdot\nabla_x,\partial^\beta_v]}_{\in Op (\<\eta\>^{k-1}\<y\>\<v\>^l)}\partial^\alpha_xf\big\|_{L^2_vH^{m_0}_x}\big\|t^{\kappa (m+k)}\<v\>^l\partial^\alpha_x\partial^\beta_vf\big\|_{L^2_vH^{m_0}_x}\\
\notag&\le \big\|t^{\kappa (m+k)}\<D_v\>^{k-1}\<D_x\>\<v\>^lf\big\|_{L^2_vH^{m+m_0}_x}\big\|t^{\kappa (m+k)}\<D_v\>^k\<v\>^lf\big\|_{L^2_vH^{m+m_0}_x}\\
&\le \delta\big\|t^{\kappa (m+k)}\<D_v\>^k\<v\>^lf\big\|^2_{L^2_vH^{m+m_0}_x}+C_\delta\big\|t^{\kappa (m+k)}\<D_x\>^k\<v\>^lf\big\|^2_{L^2_vH^{m+m_0}_x}.
\end{align}

For $J_4$, when $l=0$, by \eqref{equivalent_norm} and \eqref{estimateofL}, there exists $\nu_0>0$ such that 
\begin{align}
	&\quad\,\sum_{|\alpha|\le m,|\beta|\le k}\Re\Big(t^{\kappa (m+k)}L\<v\>^l\partial^\alpha_x\partial^\beta_v\<D_x\>^{m_0}f, t^{\kappa (m+k)}\<v\>^l\partial^\alpha_x\partial^\beta_v\<D_x\>^{m_0}f\Big)_{L^2}\notag\\
	&\le 
	-\nu_0\sum_{|\alpha|\le m,|\beta|\le k}\big(\big\|t^{\kappa (m+k)}\partial^\alpha_x\partial^\beta_v\<D_x\>^{m_0}f\big\|_{N^{s,\gamma}}^2+C\big\|t^{\kappa (m+k)}\underbrace{\partial^\alpha_x\partial^\beta_v}_{\in Op(\<y\>^m\<\eta\>^k)}\<D_x\>^{m_0}f\big\|^2_{L^2}\big)\notag\\
	&\le 
	-\nu_0\sum_{|\alpha|\le m,|\beta|\le k}\big\|t^{\kappa (m+k)}(a^{1/2})^w\partial^\alpha_x\partial^\beta_v\<D_x\>^{m_0}f\big\|_{L^2}^2+C\big\|t^{\kappa (m+k)}\<D_v\>^k\<D_x\>^{m+m_0}f\big\|^2_{L^2}.\label{eq35}
\end{align}The last inequality follows from Corollary \ref{pseudobound}.
For the first term in \eqref{eq35}, we observe 
\begin{align}
	&\quad\,\sum_{|\alpha|\le m,|\beta|\le k}\big\|t^{\kappa (m+k)}(a^{1/2})^w\partial^\alpha_x\partial^\beta_vf\big\|^2_{L^2_vH^{m_0}_x}\notag\\
&\ge \frac{1}{2}\sum_{|\alpha|\le m,|\beta|\le k}\big\|t^{\kappa (m+k)}\partial^\beta_v(a^{1/2})^w\partial^\alpha_xf\big\|^2_{L^2_vH^{m_0}_x}	- \sum_{|\alpha|\le m,|\beta|\le k}\big\|t^{\kappa (m+k)}[(a^{1/2})^w,\partial^\beta_v]\partial^\alpha_xf\big\|^2_{L^2_vH^{m_0}_x}\notag\\
&\ge \frac{1}{2C}\big\|t^{\kappa (m+k)}\<D_v\>^k\<D_x\>^m(a^{1/2})^wf\big\|^2_{L^2_vH^{m_0}_x}	- \sum_{|\alpha|\le m}\big\|t^{\kappa (m+k)}\<D_v\>^{k-1}\<D_x\>^m(a^{1/2})^w\partial^\alpha_xf\big\|^2_{L^2_vH^{m_0}_x}\notag\\
&\ge \frac{1}{4C}\big\|t^{\kappa (m+k)}\<D_v\>^k\<D_x\>^m(a^{1/2})^wf\big\|^2_{L^2_vH^{m_0}_x} - C\big\|t^{\kappa m}\<D_x\>^m(a^{1/2})^wf\big\|^2_{L^2_vH^{m_0}_x},\label{eq36}
\end{align}since $\<\eta\>^{k-1}\in S(\delta\<\eta\>^k+C_\delta)$.

For the second term in $J_4$, we observe that from the definition of $L$, when $l=0$, 
\begin{align*}
	[L,\partial^\beta_v]\partial^\alpha_xf
	&= -\sum_{\beta_1+\beta_2+\beta_3=\beta,\beta_1<\beta}C_{\beta_1,\beta_2,\beta_3,\beta}\big(\T(\partial^{\beta_2}_v\mu^{1/2},\partial^{\beta_1}_v\partial^\alpha_xf,\partial^{\beta_3}_v\mu^{1/2})+\T(\partial^{\beta_1}_v\partial^\alpha_xf,\partial^{\beta_2}_v\mu^{1/2},\partial^{\beta_3}_v\mu^{1/2})\big)
\end{align*}
Thus by \eqref{estiofGamma}, when $l=0$, 
\begin{equation}\begin{split}
		&\quad\,\sum_{|\alpha|\le m,|\beta|\le k}\Re\Big(t^{\kappa (m+k)}[L,\<v\>^l\partial^\beta_v]\partial^\alpha_xf, t^{\kappa (m+k)}\<v\>^l\partial^\alpha_x\partial^\beta_vf\Big)_{L^2_vH^{m_0}_x}\\
		&\le C\sum_{|\alpha|\le m,|\beta|\le k}\sum_{|\beta_1|\le|\beta|-1}\big\|t^{\kappa (m+k)}(a^{1/2})^w\partial^{\beta_1}_v\partial^\alpha_x\<D_x\>^{m_0}f\big\|_{L^2} \big\|t^{\kappa (m+k)}(a^{1/2})^w\partial^\alpha_x\partial^\beta_v\<D_x\>^{m_0}f\big\|_{L^2}\\
		&\le C_\delta\big\|t^{\kappa (m+k)}\<D_v\>^{k-1}(a^{1/2})^wf\big\|_{L^2_vH^{m+m_0}_x}^2 +\delta\sum_{|\alpha|\le m,|\beta|\le k}\big\|t^{\kappa (m+k)}(a^{1/2})^w\partial^\alpha_x\partial^\beta_vf\big\|^2_{L^2_vH^{m_0}_x}\label{013}\\
		&\le C_\delta\big\|t^{\kappa (m+k)}(a^{1/2})^wf\big\|_{L^2_vH^{m+m_0}_x}^2 +2\delta\big\|t^{\kappa (m+k)}\<D_v\>^k(a^{1/2})^wf\big\|^2_{L^2_vH^{m+m_0}_x}. 
\end{split} 
\end{equation}
Therefore, if $l=0$, by substituting \eqref{eq35}\eqref{eq36}\eqref{013} into $J_4$, choosing $\delta$ small, we have 
\begin{equation}\label{eq37}
	J_4 \le -\frac{\nu_0}{4C}\big\|t^{\kappa (m+k)}\<D_v\>^k(a^{1/2})^wf\big\|^2_{L^2_vH^{m+m_0}_x} + C\big\|t^{\kappa m}(a^{1/2})^wf\big\|^2_{L^2_vH^{m+m_0}_x}
	+C\big\|t^{\kappa (m+k)}\<D_v\>^kf\big\|^2_{L^2_vH^{m+m_0}_x}.
\end{equation}
Notice that the last term is bounded above by $J_1$.

If $k=0$, then by \eqref{estimateofL}\eqref{widehat}, 
\begin{align}
J_4 &= \sum_{|\alpha|\le m}\Re\Big(t^{\kappa m}\<v\>^lL\partial^\alpha_x\<D_x\>^{m_0}f, t^{\kappa m}\<v\>^l\partial^\alpha_x\<D_x\>^{m_0}f\Big)_{L^2}\notag\\
&\le -\nu_0\sum_{|\alpha|\le m}\|t^{\kappa m}\partial^\alpha_x\<D_x\>^{m_0}f\|^2_{N^{s,\gamma}_l} - C\sum_{|\alpha|\le m}\|t^{\kappa m}\partial^\alpha_x\<D_x\>^{m_0}f\|^2_{L^2}\notag\\
&\le  -\nu_0 \|t^{\kappa m}\<D_x\>^{m+m_0}f\|^2_{N^{s,\gamma}_l}+\|t^{\kappa m}f\|^2_{L^2_vH^{m+m_0}_x}. \label{014}
\end{align}Here the last term is bounded above by $J_1$.

For $J_5$, we compute 
\begin{align*}
\partial^\alpha_x\partial^\beta_v\Gamma(g,f) &= \sum_{\alpha_1+\alpha_2=\alpha}\sum_{\beta_1+\beta_2+\beta_3=\beta}C_{\alpha_1,\alpha_2,\beta_1,\beta_2,\beta_3}\T(\partial^{\alpha_1}_x\partial^{\beta_2}_vg,\partial^{\alpha_2}_x\partial^{\beta_2}_vf,\partial^{\beta_3}_v(\mu^{1/2})).
\end{align*}
Then by the estimate \eqref{estiofGamma} of $\Gamma$,
\begin{align*}
J_5 
&\le C\sum_{|\alpha|\le m,|\beta|\le k} \sum_{\alpha_1+\alpha_2=\alpha}\sum_{\beta_1+\beta_2+\beta_3=\beta}
\|t^{\kappa(|\alpha_1|+|\beta_1|)}\<v\>^l\partial^{\alpha_1}_x\partial^{\beta_1}_vg\|_{L^2_vH^{m_0}_x}\|t^{\kappa(|\alpha_2|+|\beta_2|)}\partial^{\alpha_2}_x\partial^{\beta_2}_v\<D_x\>^{m_0}f\|_{N^{s,\gamma}_l}\\
&\qquad\qquad\qquad\qquad\qquad\qquad\qquad\qquad\qquad\qquad\qquad\qquad\times\|t^{\kappa (m+k)}\partial^\alpha_x\partial^\beta_v\<D_x\>^{m_0}f\|_{N^{s,\gamma}_l},
\end{align*}
Here we will apply \eqref{assumong} on $g$ and divide the summation into two parts: $\alpha_1=\beta_1=0$ and else. In the second case, we have $|\alpha_2|<m$ or $|\beta_2|<k$ and then one can apply Young's inequality to second term to eliminate derivatives on $x$ or $v$. When $m=l=0$, by \eqref{equivalent_norm} and Corollary \ref{pseudobound}, 
\begin{align}
J_5&\le C\sum_{|\beta|\le k}\Big(\varepsilon_0\sum_{|\beta_2|\le k}\|t^{\kappa k}(a^{1/2})^w\partial^{\beta_2}_vf\|_{L^2_vH^{m_0}_x}+\sum_{|\beta_2|\le k-1}\varepsilon_1\|t^{\kappa k}(a^{1/2})^w\partial^{\beta_2}_vf\|_{L^2_vH^{m_0}_x}\Big)\|t^{\kappa k}(a^{1/2})^w\partial^\beta_vf\|_{L^2_vH^{m_0}_x}\notag\\
&\le C\Big(\varepsilon_0\|t^{\kappa k}\<D_v\>^k(a^{1/2})^wf\|_{L^2_vH^{m_0}_x}+\varepsilon_1\|t^{\kappa (k-1)}\<D_v\>^{k-1}(a^{1/2})^wf\|_{L^2_vH^{m_0}_x}\Big)\|t^{\kappa k}\<D_v\>^k(a^{1/2})^wf\|_{L^2_vH^{m_0}_x}\notag\\
&\le (\varepsilon_0C+\varepsilon_1C\delta)\|t^{\kappa k}\<D_v\>^k(a^{1/2})^wf\|^2_{L^2_vH^{m_0}_x} + \varepsilon_1C_\delta\|(a^{1/2})^wf\|^2_{L^2_vH^{m_0}_x}.\label{016}
\end{align}
In the case of $k=0$, we use \eqref{widehat} and Young's inequality to get 
\begin{align}
J_5 &\le C\sum_{|\alpha|\le m} \sum_{\alpha_1+\alpha_2=\alpha}
\|t^{\kappa|\alpha_1|}\<v\>^l\partial^{\alpha_1}_xg\|_{L^2_vH^{m_0}_x}\|t^{\kappa|\alpha_2|}\partial^{\alpha_2}_x\<D_x\>^{m_0}f\|_{N^{s,\gamma}_l}\|t^{\kappa m}\partial^\alpha_x\<D_x\>^{m_0}f\|_{N^{s,\gamma}_l}\notag\\
&\le (\varepsilon_0C+\varepsilon_1C\delta)\sum_{|\alpha|\le m}\|t^{\kappa m}\partial^\alpha_x\<D_x\>^{m_0}f\|_{N^{s,\gamma}_l}^2+\varepsilon_1C_\delta\sum_{|\alpha|\le m-1}\|t^{\kappa m}\partial^\alpha_x\<D_x\>^{m_0}f\|^2_{N^{s,\gamma}_l}\notag\\
&\le 
(\varepsilon_0C+2\varepsilon_1C\delta)\|t^{\kappa m}\<D_x\>^{m+m_0}f\|_{N^{s,\gamma}_l}^2+\varepsilon_1C_\delta\|t^{\kappa m}\<D_x\>^{m_0}f\|_{N^{s,\gamma}_l}^2,\label{017}
\end{align}
Similarly, if $m=k=0$, we have 
\begin{align}
	\label{018}J_5\le \varepsilon_0C\|\<D_x\>^{m_0}f\|_{N^{s,\gamma}_l}^2. 
\end{align}

In a summary, if $m=l=0$, substituting \eqref{07}\eqref{09}\eqref{011}\eqref{eq37}\eqref{016} into \eqref{eq6},
\begin{align*}
	&\quad\,\partial_t\|t^{\kappa k}f\|_{H^k_vH^{m_0}_x}^2 
	+ \varepsilon\|Mt^{\kappa k}f\|_{H^k_vH^{m_0}_x}^2
	+\nu_0\big\|t^{\kappa (m+k)}\<D_v\>^k(a^{1/2})^wf\big\|_{L^2_vH^{m_0}_x}^2\\
	&\le C_{\kappa,k}\big(\delta\|(a^{1/2})^wt^{\kappa k}f\|^2_{H^k_vH^{m_0}_x} + C_\delta\|f\|^2_{L^2_vH^{m_0}_x}\\
	&\qquad\quad\quad+\varepsilon\delta\|t^{\kappa k}\<D_v\>^kMf\|^2_{L^2_vH^{m_0}_x}
	+\varepsilon C_\delta\|Mf\|^2_{L^2_vH^{m_0}_x}\\
	&\qquad\quad\quad+\delta\|t^{\kappa k}\<D_v\>^{k}f\|^2_{L^2_vH^{m_0}_x} + C_\delta\|t^{\kappa k}\<D_x\>^kf\|^2_{L^2_vH^{m_0}_x}\\
	&\qquad\quad\quad + \|(a^{1/2})^wf\|^2_{L^2_vH^{m_0}_x}
	+J_1\\
	&\qquad\quad\quad+(\varepsilon_0C+\varepsilon_1C\delta)\|t^{\kappa k}(a^{1/2})^wf\|^2_{H^k_vH^{m_0}_x} + \varepsilon_1C_\delta\|(a^{1/2})^wf\|^2_{L^2_vH^{m_0}_x}\big).
\end{align*}
Choosing constant $\delta$ and $\varepsilon_0$ sufficiently small, applying $\varepsilon_1\le 1$, noting $\|\cdot\|_{L^2}\lesssim \|(a^{1/2})^w(\cdot)\|_{L^2}$, we obtain 
\begin{align*}
	\notag&\quad\,\partial_t\|t^{\kappa k}f\|_{H^k_vH^{m_0}_x}^2 
	+ \varepsilon\|Mt^{\kappa k}f\|_{H^k_vH^{m_0}_x}^2
	+\nu_0\|(a^{1/2})^wt^{\kappa k}f\|_{H^k_vH^{m_0}_x}^2\\
	&\le C_{\kappa,k}\big(\varepsilon\|Mf\|^2_{L^2_vH^{m_0}_x} + \|(a^{1/2})^wf\|^2_{L^2_vH^{m_0}_x} + \|t^{\kappa k}\<D_x\>^kf\|^2_{L^2_vH^{m_0}_x}\big).
\end{align*}
If $k=l=0$, a similar computation by substituting \eqref{07}\eqref{08}\eqref{010}\eqref{014}\eqref{017} into \eqref{eq6}, we obtain 
\begin{align*}
	\notag&\quad\,\partial_t\|t^{\kappa m}f\|_{L^2_vH^{m+m_0}_x}^2 
	+ \varepsilon\|Mt^{\kappa m}f\|_{L^2_vH^{m+m_0}_x}^2
	+\nu_0\|(a^{1/2})^wt^{\kappa m}f\|_{L^2_vH^{m+m_0}_x}^2\\
	&\le \delta\|(b^{1/2})^wt^{\kappa m}f\|^2_{L^2_vH^{m+m_0}_x} + C_{\delta}\|(a^{1/2})^wf\|^2_{L^2_vH^{m_0}_x}.
\end{align*}
%If we only assume $k=0$, then substituting \eqref{07}\eqref{09}\eqref{010}\eqref{014}\eqref{017} into \eqref{eq6}.
%\begin{align*}
%	\notag&\quad\,\partial_t\|t^{\kappa m}\<v\>^lf\|_{L^2_vH^{m+m_0}_x}^2 
%	+ \varepsilon\|Mt^{\kappa m}\<v\>^lf\|_{L^2_vH^{m+m_0}_x}^2
%	+\nu_0\|t^{\kappa m}\<D_x\>^{m+m_0}f\|_{N^{s,\gamma}_l}^2\\
%	&\le C_{\kappa,m}\big(\|f\|^2_{L^2_vH^{m_0}_x}+\varepsilon\|t^{\kappa m}Mf\big\|^2_{L^2_vH^{m+m_0}_x}+\|t^{\kappa m}(a^{1/2})^wf\|^2_{L^2H^{m+m_0}_x}+\|(a^{1/2})^w\<v\>^lf\|^2_{L^2_vH^{m_0}_x}\big).
%\end{align*}
The last case is the simplest one: $m=k=0$, we substitute \eqref{06}\eqref{09}\eqref{010}\eqref{014}\eqref{018} into \eqref{eq6}, then 
\begin{align}\label{23}
	\quad\,\partial_t\|\<v\>^lf\|_{L^2_vH^{m_0}_x}^2 
	+ \varepsilon\|M\<v\>^lf\|_{L^2_vH^{m_0}_x}^2
	+\nu_0\|\<D_x\>^{m_0}f\|_{N^{s,\gamma}_l}^2
	&\le  C\big(\varepsilon\|Mf\|^2_{L^2_vH^{m_0}_x}+\|f\|^2_{L^2_vH^{m_0}_x}\big).
\end{align}
Integrate \eqref{23} on $t$ and apply \eqref{eq0}, we have 
\begin{align*}
	\sup_{0\le t\le T}\|\<v\>^lf\|_{L^2_vH^{m_0}_x}^2 
	+ \varepsilon\int^T_0\|M\<v\>^lf\|_{L^2_vH^{m_0}_x}^2\,dt
	+\nu_0\int^T_0\|\<D_x\>^{m_0}f\|_{N^{s,\gamma}_l}^2\,dt
	&\le C\varepsilon_0^2. 
\end{align*} 

\end{proof}

Next we establish the smoothing estimate of $x$. The idea here is to consider the Poisson bracket between $v\cdot\nabla_x$ and our chosen function $\theta$. It will give us $b^{1/2}$ regularity for solution $f$ to \eqref{lineareqvarepsilon}. 
\begin{Thm}\label{Thm33}
Let $T\in(0,1]$, $m_0>\frac{d}{2}$. Assume $g$ satisfies \eqref{assumong}.		Let $f$ be the solution to equation \eqref{lineareqvarepsilon}.
	Let $m\in\N$ $(m\ge 1)$ to be the index for derivative of $x$ and weight $\<v\>$ respectively. For sufficiently large $\kappa>1$,
	\begin{align}\notag
		&\quad\,C_0\sup_{0\le t\le T}\|t^{\kappa m}f(t)\|_{L^2_vH^{m+m_0}_x}^2 
		+ C_0\varepsilon\int^T_0\|Mt^{\kappa m}f\|_{L^2_vH^{m+m_0}_x}^2\,dt\\\label{30}&\quad
		+C_0\nu_0\int^T_0\|(a^{1/2})^wt^{\kappa m}f\|_{L^2_vH^{m+m_0}_x}^2\,dt+\int^T_0\|t^{\kappa m}(b^{1/2})^wf\|^2_{L^2_vH^{m+m_0}_x}\,dt\\
		&\le C_0C\varepsilon_0^2.\notag
	\end{align}
\end{Thm}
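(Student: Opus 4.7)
The plan is to gain the additional $x$-regularity encoded by $(b^{1/2})^w$ by exploiting the hypoelliptic Poisson bracket between the transport $v\cdot\nabla_x$ and the symbol $\theta(v,y,\eta)$ introduced after Theorem~\ref{Thm}. Set $F:=t^{\kappa m}\<D_x\>^{m+m_0}f$. Since $\<D_x\>^{m+m_0}$ is a Fourier multiplier in $x$ and so commutes with $v\cdot\nabla_x$, $M^*M$, and $L$, equation \eqref{lineareqvarepsilon} yields
\begin{equation*}
\partial_tF + v\cdot\nabla_xF + \varepsilon M^*MF = LF + t^{\kappa m}\<D_x\>^{m+m_0}\Gamma(g,f) + \kappa m\,t^{\kappa m-1}\<D_x\>^{m+m_0}f.
\end{equation*}

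The heart of the argument is the evolution of $(\theta^wF, F)_{L^2}$. A direct computation in the phase space $(x,v,y,\eta)$ gives $\{v\cdot y,\theta\}=-y\cdot\partial_\eta\theta$, whose principal part $(1+|v|^2+|y|^2+|v\wedge y|^2)^{\delta_1-1}(|y|^2+|v\wedge y|^2)\chi(v,\eta)$ is pointwise comparable to $b(v,y)$ up to a $\Gamma$-lower-order remainder. Using the antisymmetry of $v\cdot\nabla_x$ on $L^2_x$ and the self-adjointness of $\theta^w$,
\begin{equation*}
-2\Re(\theta^wv\cdot\nabla_xF, F)_{L^2}=-([v\cdot\nabla_x,\theta^w]F, F)_{L^2}\ge c_1\|(b^{1/2})^wF\|_{L^2}^2-C\|F\|_{L^2}^2,
\end{equation*}
where the lower bound comes from a sharp Gårding / Fefferman–Phong inequality applied to the nonnegative principal symbol $y\cdot\partial_\eta\theta$. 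The remaining contributions to $\tfrac{d}{dt}(\theta^wF, F)$ are routine: (i) $|(\theta^w LF,F)|$ is controlled via $\theta^w L\in Op(a\,b^{1/2})$, Cauchy–Schwarz, and Young's inequality by $\delta'\|(b^{1/2})^wF\|^2+C_{\delta'}\|(a^{1/2})^wF\|^2$; (ii) the $\Gamma$-term is estimated using \eqref{estiofGamma} and the iteration hypothesis \eqref{assumong} on $g$; (iii) the $\varepsilon M^*M$ term is handled as in Theorem~\ref{Thm31}; (iv) the $t^{\kappa m-1}$-remainder is absorbed by Young's inequality and the $\kappa$-mollifier, as in the proof of \eqref{21}.

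Combining these yields a differential inequality of the form $\tfrac{d}{dt}(\theta^wF, F)+c_1\|(b^{1/2})^wF\|^2\le\delta'\|(b^{1/2})^wF\|^2+C\|(a^{1/2})^wF\|^2+\text{integrable remainder}$. Adding $C_0$ times \eqref{21}, with $C_0$ chosen large enough that $C_0\nu_0$ dominates $C$ and with $\delta$ in \eqref{21} small enough that $C_0\delta+\delta'<c_1/2$, integrating from $0$ to $T$, using $F(0)=0$ and the pseudo-differential bound $|(\theta^wF, F)(T)|\le\delta''\|(b^{1/2})^wF(T)\|^2+C\|F(T)\|^2$ (absorbed by the $C_0\|F(T)\|^2$ term on the LHS), one arrives at \eqref{30}; the $\|(a^{1/2})^wf\|^2$-terms on the RHS of \eqref{21} are integrable in time with $L^1_t$-bound $C\varepsilon_0^2$ via Theorem~\ref{Thm31}. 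The main obstacle is the sharp commutator/Gårding step together with verifying that $\theta\in S(b^{1/2})$ is $\Gamma$-admissible — requiring an appropriate choice of the cutoff $\chi(v,\eta)$ — and that the Moyal-expansion remainders in $[v\cdot\nabla_x,\theta^w]$ are genuinely of order strictly lower than $b$; this is precisely where $\gamma+2s>0$ enters, since it guarantees $\delta_1>0$ so that $(b^{1/2})^w$ represents a true gain in $x$-regularity.
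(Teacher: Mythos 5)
Your overall strategy is the same as the paper's: exploit the Poisson bracket between $\theta$ and the transport symbol $v\cdot y$ to extract $(b^{1/2})^w$, replace $v\cdot\nabla_x f$ via the equation, estimate the resulting terms, and add a large multiple of \eqref{21} before integrating in $t$. The structure of your $\tfrac{d}{dt}(\theta^wF,F)$ bookkeeping corresponds one-to-one with the paper's terms $K_1,\dots,K_5$.

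The one place where your argument as written would not go through is the commutator lower bound. You appeal to sharp G\aa rding / Fefferman--Phong for the symbol $y\cdot\partial_\eta\theta$ and claim $-([v\cdot\nabla_x,\theta^w]F,F)\ge c_1\|(b^{1/2})^wF\|^2-C\|F\|^2$. This is problematic on two counts. First, $y\cdot\partial_\eta\theta$ is not a nonnegative symbol: the exact computation \eqref{eqbracket} gives $\{\theta,v\cdot y\}=b+I_1+I_2+I_3$ with $I_2\le 0$ and $I_1,I_3$ of indefinite sign. Second, even if it were, G\aa rding gives a lower bound of the form ``$\ge -C$'' (positivity up to a controlled remainder), not the \emph{coercive} bound $c_1\|(b^{1/2})^wF\|^2$ that you need, and the remainder terms $I_1,I_2,I_3$ are genuinely of size $a(v,\eta)$ (which is unbounded), so the error cannot be dominated by $\|F\|^2_{L^2}$ alone. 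The paper avoids both issues by doing the explicit Poisson-bracket computation, noting that $v\cdot y$ is linear in $v$ so the Moyal expansion of $[\theta^w,(v\cdot y)^w]$ truncates exactly at the first bracket, and then showing each $I_j\in S(a)$; this yields the correct error $C\|(a^{1/2})^wF\|^2$, which is precisely what the good term from \eqref{21} (times $C_0$) absorbs. Related to this, you write $\theta\in S(b^{1/2})$; the role of the cutoff $\chi(v,\eta)$ is to make $\theta\in S(1)$ (using $\delta_1-\tfrac12+\tfrac{\delta_2}{2}\le 0$), which is what lets you absorb the boundary term $|(\theta^wF,F)(T)|\lesssim\|F(T)\|^2$ into the $C_0\sup_t\|F(t)\|^2$ term on the left; if $\theta$ were only in $S(b^{1/2})$ you would be left with $\|(b^{1/2})^wF(T)\|^2$ at the endpoint, which is \emph{not} on the left side (only its time integral is), so the absorption you describe would not close. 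Once you replace the G\aa rding step by the explicit bracket computation with $S(a)$ remainders, and record $\theta\in S(1)$, your proposal matches the paper's proof.
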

\begin{proof}
Assume $\gamma+2s>0$. 
We define constants $\delta_1,\delta_2>0$ as the followings. 
Let $c = \max\{0,-\frac{\gamma}{2}\}$. If $\gamma+3s-c\le 1$, we let 
\begin{align*}
\delta_1 = \frac{-s+c}{\gamma+s+c-2},\qquad\delta_2 = \frac{-1}{\gamma+s+c-2}.
\end{align*}
If $\gamma+3s-c\ge 1$, we let
\begin{align*}
\delta_1 = \frac{s-c}{2s-2c+1},\qquad\delta_2 = \frac{1}{2s-2c+1}.
\end{align*}
Then in each case, since $s-c\in(0,1)$, by direct calculation we have the following estimates. 
\begin{align}
\delta_1\le 1,\quad\delta_1\le \delta_2,\quad\delta_1-\frac{1}{2}+\frac{\delta_2}{2}\le 0,\quad   \frac{\delta_1}{\delta_2}+c\le s,\quad \frac{\delta_1-1}{\delta_2}\le\gamma+2s-2.
\end{align} 
Let $\chi_0$ be a smooth cutoff function such that $\chi_0(z)$ equal to $1$ when $|z|<\frac{1}{2}$ and equal to $0$ when $|z|\ge 1$. Define 
\begin{align*}
	b(v,y) &= (1+|v|^2+|y|^2+|v\wedge y|^2)^{\delta_1},\\
\chi(v,\eta) &= \chi_0\bigg(\frac{1+|v|^2+|\eta|^2+|v\wedge\eta|^2}{(1+|v|^2+|y|^2+|v\wedge y|^2)^{\delta_2}}\bigg),
\end{align*}and
\begin{align*}
\theta(v,\eta) = (1+|v|^2+|y|^2+|v\wedge y|^2)^{\delta_1-1} (y\cdot\eta+(v\wedge y)\cdot(v\wedge\eta))\chi(v,\eta).
\end{align*}
We first check that $\theta\in S(1)$ and compute $\{\theta,v\cdot y\}$. 
Indeed, using the support of $\chi$, we have 
\begin{align*}
|\theta(v,\eta)|&\lesssim (1+|v|^2+|y|^2+|v\wedge y|^2)^{\delta_1-1/2}(|\eta|+|v\wedge\eta|)\chi(v,\eta)\\
&\lesssim (1+|v|^2+|y|^2+|v\wedge y|^2)^{\delta_1-1/2+\delta_2/2}\\& \lesssim 1,
\end{align*}
since $\delta_1-1/2+\delta/2\le 0$. Same argument are valid for the derivatives of $\theta$ by Leibniz's formula and hence $\theta\in S(1)$. Therefore $\theta^w$ is a linear bounded operator on $L^2$. On the other hand, 
\begin{align}
\{\theta, v\cdot y\}\notag
&= \partial_\eta\theta\cdot\partial_v(v\cdot y)\\
&= (1+|v|^2+|y|^2+|v\wedge y|^2)^{\delta_1-1}(|y|^2+|v\wedge y|^2)\chi(v,\eta)\notag\\
&\qquad + (1+|v|^2+|y|^2+|v\wedge y|^2)^{\delta_1-1}(y\cdot\eta+(v\wedge y)\cdot(v\wedge\eta))\chi_\eta\cdot y\notag\\
&= (1+|v|^2+|y|^2+|v\wedge y|^2)^{\delta_1} + I_1+I_2+I_3,\label{eqbracket}\end{align}
where 
\begin{align*}
I_1 &=  (1+|v|^2+|y|^2+|v\wedge y|^2)^{\delta_1}(\chi-1),\\
I_2 &= - \<v\>^2(1+|v|^2+|y|^2+|v\wedge y|^2)^{\delta_1-1}\chi, \\
I_3 &= (1+|v|^2+|y|^2+|v\wedge y|^2)^{\delta_1-1}(y\cdot\eta+(v\wedge y)\cdot(v\wedge\eta))\chi_\eta\cdot y. 
\end{align*}
On the support of $\chi-1$, we have $(1+|v|^2+|y|^2+|v\wedge y|^2)^{\delta_2}\le 2(1+|v|^2+|\eta|^2+|v\wedge\eta|^2)$. Since $\delta_1>0$, 
\begin{align*}
|I_1| &\lesssim (1+|v|^2+|\eta|^2+|v\wedge\eta|^2)^{\frac{\delta_1}{\delta_2}}\\
&\lesssim \<v\>^\gamma(1+|v|^2+|\eta|^2+|v\wedge\eta|^2)^{\frac{\delta_1}{\delta_2}+\max\{{0,-\frac{\gamma}{2}}\}}\\&\lesssim a(v,\eta),
\end{align*}since $\frac{\delta_1}{\delta_2}+\max\{0,-\frac{\gamma}{2}\}\le s$. Since the support of  derivative of $\chi-1$ is contained in the support of $\chi-1$, same control holds true for the derivatives of $I_1$ by Leibniz's formula, and hence $I_1\in S(a)$. 
Similarly, since $\delta_1\le1$
\begin{align*}
|I_2| 
%&=\<v\>^2(1+|v|^2+|y|^2+|v\wedge y|^2)^{\delta_1-1}\chi\\
&\le \<v\>^2(1+|v|^2+|\eta|^2+|v\wedge \eta|^2)^{(\delta_1-1)/\delta_2}\\
&\le \<v\>^{2+(\delta_1-1)/\delta_2}\\&\le a(v,\eta),
\end{align*}
since $2+(\delta_1-1)/\delta_2\le\gamma+2s$. Same control is valid for the derivatives of $I_2$ by Leibniz's formula, and hence $I_2\in S(a)$. 
For the last term, since $\delta_1-\delta_2\le0$, 
\begin{align*}
|I_3|&\le (1+|v|^2+|y|^2+|v\wedge y|^2)^{\delta_1}(|\eta|+|v\wedge\eta|)|\chi_\eta|\\
&\lesssim (1+|v|^2+|y|^2+|v\wedge y|^2)^{\delta_1-\delta_2}(|\eta|+|v\wedge\eta|)^2\1_{\text{support of $\chi$}}\\
&\lesssim (1+|v|^2+|\eta|^2+|v\wedge\eta|^2)^{\frac{\delta_1}{\delta_2}}\\
&\lesssim a(v,\eta), 
\end{align*}where the last inequality follows from $I_1$.
Therefore, $I_1+I_2+I_3\in S(a)$ and hence \eqref{compostion}\eqref{eqbracket} gives that for $h\in\S$, 
\begin{equation}\begin{split}\label{eqy}
\|(b^{1/2})^wh\|^2_{L^2}=\big(b^{1/2}\widehat{h},\widehat{h}\big)_{L^2} &= \big(\{\theta,v\cdot y\}^w\widehat{h},\widehat{h}\big)_{L^2} + \big((I_1+I_2+I_3)^w\widehat{h},\widehat{h}\big)_{L^2}\\
&\le 2\Re(2\pi iv\cdot y\widehat{h},\theta^w\widehat{h})_{L^2} + C\|(a^{1/2})^w\widehat{h}\|^2_{L^2}\\
&\le 2\Re(v\cdot\nabla_xh,(\theta^w\widehat{h})^\vee)_{L^2} + C\|(a^{1/2})^wh\|^2_{L^2},
\end{split}
\end{equation}where $\widehat{h}$ is the Fourier transform of $h$ on spatial variable $x$. By density, estimate \eqref{eqy} is valid for $h\in H(\<v\>^{1+\gamma+2s}\<\eta\>^{2s}\<y\>)$.

Now we take multi-index $|\alpha|\le m$ and $f$ to be the solution to equation \eqref{lineareqvarepsilon}. Then we can substitute $t^{\kappa m}\<D_x\>^{m_0}\partial^\alpha_xf$ into \eqref{eqy}, 
\begin{equation}\label{eqK}\begin{split}
&\quad\,\|t^{\kappa m}(b^{1/2})^w\<D_x\>^{m_0}\partial^\alpha_xf\|_{L^2_{y,v}}\\
&\le 
2\Re\big(t^{\kappa m}\<D_x\>^{m_0}\partial^\alpha_x(v\cdot\nabla_x f),(t^{\kappa m}\theta^w\<y\>^{m_0}\widehat{\partial^\alpha_xf})^{\vee}\big)_{L^2} + C\|t^{\kappa m}(a^{1/2})^wf\|^2_{L^2_vH^{m+m_0}_x}.
\end{split}  
\end{equation}
By equation \eqref{lineareqvarepsilon}, we have 
\begin{align*}
&\quad\,\Re\left(t^{\kappa m}\<D_x\>^{m_0}\partial^\alpha_x\big(v\cdot\nabla_x f\big),(t^{\kappa m}\theta^w\<y\>^{m_0}\widehat{\partial^\alpha_xf})^{\vee}\right)_{L^2}\\
&=\Re\left(t^{\kappa m}\<D_x\>^{m_0}\partial^\alpha_x\big(-\partial_tf -\varepsilon M^*Mf+Lf+\Gamma(g,f)\big),(t^{\kappa m}\theta^w\<y\>^{m_0}\widehat{\partial^\alpha_xf})^{\vee}\right)_{L^2}\\
&= K_1+K_2+K_3+K_4+K_5,
\end{align*}
where 
\begin{align*}
K_1 &= -\frac{1}{2}\partial_t\left(t^{\kappa m}\<D_x\>^{m_0}\partial^\alpha_xf,(t^{\kappa m}\theta^w\<y\>^{m_0}\widehat{\partial^\alpha_xf})^{\vee}\right)_{L^2}\\
K_2 &= -C_{\kappa,m}\Re\left(t^{\kappa m-1}\<D_x\>^{m_0}\partial^\alpha_xf,(t^{\kappa m}\theta^w\<y\>^{m_0}\widehat{\partial^\alpha_xf})^{\vee}\right)_{L^2}\\
K_3 &= -\varepsilon\Re\left(t^{\kappa m}\<D_x\>^{m_0}\partial^\alpha_x M^*Mf,(t^{\kappa m}\theta^w\<y\>^{m_0}\widehat{\partial^\alpha_xf})^{\vee}\right)_{L^2}\\
K_4 &= \Re\left(t^{\kappa m}\<D_x\>^{m_0}\partial^\alpha_xLf,(t^{\kappa m}\theta^w\<y\>^{m_0}\widehat{\partial^\alpha_xf})^{\vee}\right)_{L^2}\\
K_5 &= \Re\left(t^{\kappa m}\<D_x\>^{m_0}\partial^\alpha_x\Gamma(g,f),(t^{\kappa m}\theta^w\<y\>^{m_0}\widehat{\partial^\alpha_xf})^{\vee}\right)_{L^2}.
\end{align*}
For $K_2$, for $\delta>0$, we choose $\kappa$ so large that 
\begin{align*}
t^{\kappa m-1} &\lesssim \big((\delta b^{1/2})^{\frac{\kappa m-1}{\kappa m}}t^{\kappa m-1}\big)^{\frac{\kappa m}{\kappa m-1}} + \big((\delta b^{1/2})^{-\frac{\kappa m-1}{\kappa m}}\big)^{\kappa m},\\
&\lesssim \delta t^{\kappa m}b^{1/2} + C_\delta\<v\>^{-l}\<y\>^{-m},
\end{align*}by \eqref{eq09}. Then we have $t^{\kappa m-1}\in S(\delta t^{\kappa m}b^{1/2} + C_\delta\<v\>^{-l}\<y\>^{-m})$ uniformly in $t,\delta$. Hence by lemma \ref{pseudobound} and $\theta\in S(1)$, $|\alpha|\le m$,
\begin{align*}
K_2 &\le C_{\kappa,m}\big\|t^{\kappa m-1}\<D_x\>^{m_0}\partial^\alpha_xf\big\|_{L^2}\big\|(t^{\kappa m}\theta^w\<y\>^{m_0}\widehat{\partial^\alpha_xf})^{\vee}\big\|_{L^2}\\
%&\le C\kappa m\big(\delta\big\|t^{\kappa m}(1+|v|^2+|y|^2+|v\wedge y|^2)^{\delta_1/2}\<y\>^{m+m_0}\widehat{f}\big\|_{L^2_{y,v}}+C_\delta\big\|f\big\|_{L^2}\big)\big\|t^{\kappa m}\<D_x\>^{m+m_0}f\big\|_{L^2}\\
&\le \delta \|t^{\kappa m}(b^{1/2})^wf\big\|^2_{L^2_vH^{m+m_0}_x}+C_{\delta}\|f\|^2_{L^2_vH^{m_0}_x}+C_\delta\big\|t^{\kappa m}f\big\|^2_{L^2_vH^{m+m_0}_x}.
\end{align*}For $K_3$ and $K_4$, we use $L\in S(a)$ and $M=\<v\>^N\<D_v\>^N\<D_x\>^N$ to obtain 
\begin{align*}
|K_3| + |K_4| \le \varepsilon C\|t^{\kappa m}Mf\|^2_{L^2_vH^{m+m_0}_x} + C\|t^{\kappa m}(a^{1/2})^wf\|^2_{L^2_vH^{m+m_0}_x}. 
\end{align*}
For $K_5$, we compute 
\begin{align*}
\partial^\alpha_x\Gamma(g,f) &= \sum_{\beta\le\alpha}\begin{pmatrix}
\alpha\\\beta
\end{pmatrix}\Gamma(\partial^\beta_xg,\partial^{\alpha-\beta}_xf).
\end{align*}
Then by the estimate \eqref{estiofGamma} of $\Gamma$ and assumption \eqref{assumong} on $g$, a similar computation to \eqref{016} with the help of \eqref{widehat} yields that 
\begin{align*}
K_5 &\le C\Big|\sum_{\beta\le\alpha}\Big(t^{\kappa m}\<D_x\>^{m_0}\Gamma(\partial^\beta_xg,\partial^{\alpha-\beta}_xf),(t^{\kappa m}\theta^w\<y\>^{m_0}\widehat{\partial^\alpha_xf})^{\vee}\Big)_{L^2}\Big|\\
&\le C\sum_{\beta\le\alpha}\|t^{\kappa |\beta|}\partial^\beta_xg\|_{L^2_vH^{m_0}_x}\|t^{\kappa|\alpha-\beta|}\<D_x\>^{m_0}\partial^{\alpha-\beta}_xf\|_{N^{s,\gamma}}\|t^{\kappa m}\<D_x\>^{m+m_0}f\|_{N^{s,\gamma}}\\
&\le (\varepsilon_0C+\varepsilon_1C\delta)\|t^{\kappa m}(a^{1/2})^w\<D_x\>^{m+m_0}f\|^2_{L^2} +\varepsilon_1C_\delta\|(a^{1/2})^w\<D_x\>^{m_0}f\|^2_{L^2} .
\end{align*}
Substitute these estimate into \eqref{eqK} and choose $\delta$ sufficiently small, we see  
\begin{align*}
&\quad\,\frac{1}{2}\|t^{\kappa m}(b^{1/2})^w\<D_x\>^{m_0}\partial^\alpha_xf\|^2_{L^2}\\
&\le 
-C\partial_t\left(t^{\kappa m}\<D_x\>^{m_0}\partial^\alpha_xf,(t^{\kappa m}\theta^w\<y\>^{m_0}\widehat{\partial^\alpha_xf})^{\vee}\right)_{L^2}
 + C\|t^{\kappa m}(a^{1/2})^wf\|^2_{L^2_vH^{m+m_0}_x}\\
&\quad+C\|f\|^2_{L^2_vH^{m_0}_x}
+\varepsilon C\|t^{\kappa m}Mf\|^2_{L^2_vH^{m+m_0}_x}+\varepsilon_1C\|(a^{1/2})^wf\|^2_{L^2_vH^{m_0}_x}.
\end{align*}
Taking summation on $|\alpha|\le m$, 
\begin{align}
&\quad\,\|t^{\kappa m}(b^{1/2})^wf\|^2_{L^2_vH^{m+m_0}_x}\notag\\
&\le 
-C\sum_{|\alpha|\le m}\partial_t\left(t^{\kappa m}\<D_x\>^{m_0}\partial^\alpha_xf,(t^{\kappa m}\theta^w\<y\>^{m_0}\widehat{\partial^\alpha_xf})^{\vee}\right)_{L^2}
+ C\|t^{\kappa m}(a^{1/2})^wf\|^2_{L^2_vH^{m+m_0}_x}\label{eq15}\\
&\quad+C\|f\|^2_{L^2_vH^{m_0}_x}
+\varepsilon C\|t^{\kappa m}Mf\|^2_{L^2_vH^{m+m_0}_x}+\varepsilon_1C\|(a^{1/2})^wf\|^2_{L^2_vH^{m_0}_x}.\notag
\end{align}

We multiply \eqref{21} with a large constant $C_0$ and add to \eqref{eq15}, then  
\begin{align*}
&\quad\,C_0\partial_t\|t^{\kappa m}f\|_{L^2_vH^{m+m_0}_x}^2 
+ C_0\varepsilon\|Mt^{\kappa m}f\|_{L^2_vH^{m+m_0}_x}^2
\\
&\quad\,+C_0\nu_0\|(a^{1/2})^wt^{\kappa m}f\|_{L^2_vH^{m+m_0}_x}^2+\|(b^{1/2})^wt^{\kappa m}f\|^2_{L^2_vH^{m+m_0}_x}\\
&\le C_0\delta\|(b^{1/2})^wt^{\kappa m}f\|^2_{L^2_vH^{m+m_0}_x} + C_0C_{\delta}\|(a^{1/2})^wf\|^2_{L^2_vH^{m_0}_x}\\
&\quad-C\sum_{|\alpha|\le m}\partial_t\left(t^{\kappa m}\<D_x\>^{m_0}\partial^\alpha_xf,(t^{\kappa m}\theta^w\<y\>^{m_0}\widehat{\partial^\alpha_xf})^{\vee}\right)_{L^2}
+ C\|(a^{1/2})^wt^{\kappa m}f\|^2_{L^2_vH^{m+m_0}_x}\\
&\quad+C\|f\|^2_{L^2_vH^{m_0}_x}
+\varepsilon C\|t^{\kappa m}Mf\|^2_{L^2_vH^{m+m_0}_x}+\varepsilon_1C\|(a^{1/2})^wf\|^2_{L^2_vH^{m_0}_x}.
\end{align*}
Taking $C_0>1$ sufficiently large, and then picking $\delta$ sufficiently small which depends on $C_0$, we get 
\begin{align*}
&C_0\partial_t\|t^{\kappa m}f\|_{L^2_vH^{m+m_0}_x}^2 
+ C_0\varepsilon\|Mt^{\kappa m}f\|_{L^2_vH^{m+m_0}_x}^2
+C_0\nu_0\|(a^{1/2})^wt^{\kappa m}f\|_{L^2_vH^{m+m_0}_x}^2+\|(b^{1/2})^wt^{\kappa m}f\|^2_{L^2_vH^{m+m_0}_x}\\
&\le  C_0C_{\delta}\big(\|f\|^2_{L^2_vH^{m_0}_x}+\|(a^{1/2})^wf\|^2_{L^2_vH^{m_0}_x}\big)-C\sum_{|\alpha|\le m}\partial_t\left(t^{\kappa m}\<D_x\>^{m_0}\partial^\alpha_xf,(t^{\kappa m}\theta^w\<y\>^{m_0}\widehat{\partial^\alpha_xf})^{\vee}\right)_{L^2}.
\end{align*}
Taking integral on $t\in(0,\tau)$, applying $\theta\in S(1)$ and \eqref{eq0}, we have 
\begin{align*}
&\quad\,C_0\|\tau^{\kappa m}f(\tau)\|_{L^2_vH^{m+m_0}_x}^2 
+ C_0\varepsilon\int^\tau_0\|Mt^{\kappa m}f\|_{L^2_vH^{m+m_0}_x}^2\,dt
\\
&\quad\,+C_0\nu_0\int^\tau_0\|(a^{1/2})^wt^{\kappa m}f\|_{L^2_vH^{m+m_0}_x}^2\,dt+\int^\tau_0\|t^{\kappa m}(b^{1/2})^wf\|^2_{L^2_vH^{m+m_0}_x}\,dt\\
&\le  C_0C_{\delta}\int^\tau_0\big(\|f\|^2_{L^2_vH^{m_0}_x}+\|(a^{1/2})^wf\|^2_{L^2_vH^{m_0}_x}\big)\,dt\\
&\quad-C\sum_{|\alpha|\le m}\left(\tau^{\kappa m}\<D_x\>^{m_0}\partial^\alpha_xf(\tau),(\tau^{\kappa m}\theta^w\<y\>^{m_0}\widehat{\partial^\alpha_xf(\tau)})^{\vee}\right)_{L^2}\\
&\le C_0C_{\delta}\varepsilon_0^2+C\|\tau^{\kappa m}f(\tau)\|_{L^2_vH^{m+m_0}_x}^2.
\end{align*}
Finally, notice that the second constant $C$ is independent of $\delta$, so we can pick $C_0$ sufficiently large to absorb the second term. Then for $T\in(0,1]$
\begin{align*}\notag
&\quad\,C_0\sup_{0\le t\le T}\|t^{\kappa m}f(t)\|_{L^2_vH^{m+m_0}_x}^2 
+ C_0\varepsilon\int^T_0\|Mt^{\kappa m}f\|_{L^2_vH^{m+m_0}_x}^2\,dt\\&\quad
+C_0\nu_0\int^T_0\|(a^{1/2})^wt^{\kappa m}f\|_{L^2_vH^{m+m_0}_x}^2\,dt+\int^T_0\|t^{\kappa m}(b^{1/2})^wf\|^2_{L^2_vH^{m+m_0}_x}\,dt\\
&\le C_0C\varepsilon_0^2.\notag
\end{align*}
\end{proof}

Finally, we can summarize the estimate on regularity for $f$ and complete the iteration to obtain the regularity for solution to Boltzmann equation. 

\begin{proof}[Proof of Theorem \ref{Main}]
	Let $m,k,l\ge 0$. Let $f$ be the solution to \eqref{lineareqvarepsilon}.
Estimate \eqref{30} and \eqref{24} gives that for $m,l\in\N$, 
\begin{align}\notag
	&\sup_{0\le t\le T}\|t^{\kappa m}f(t)\|_{L^2_vH^{m+m_0}_x}^2 
	+ \varepsilon\int^T_0\|Mt^{\kappa m}f\|_{L^2_vH^{m+m_0}_x}^2\,dt\\
	&\quad\quad
	+\sup_{0\le t\le T}\|\<v\>^lf\|_{L^2_vH^{m_0}_x}^2 
	+ \varepsilon\int^T_0\|M\<v\>^lf\|_{L^2_vH^{m_0}_x}^2\,dt
	\le C\varepsilon_0^2.\label{eq51a}
\end{align}
The constants are independent of $\varepsilon$. 
We integral \eqref{20} on $t$ and apply \eqref{eq51a}, then for $k\in\N$, 
\begin{align}
	\notag&\quad\,\sup_{0\le t\le T}\|t^{\kappa k}f\|_{H^k_vH^{m_0}_x}^2 
	+ \varepsilon\int^T_0\|Mt^{\kappa k}f\|_{H^k_vH^{m_0}_x}^2\,dt
	+\nu_0\int^T_0\|(a^{1/2})^wt^{\kappa k}f\|_{H^k_vH^{m_0}_x}^2\,dt\\
	&\le C_{\kappa,k}\big(\varepsilon \int^T_0\|Mf\|^2_{L^2_vH^{m_0}_x}\,dt + \int^T_0\|(a^{1/2})^wf\|^2_{L^2_vH^{m_0}_x}\,dt + \int^T_0\|t^{\kappa k}\<D_x\>^kf\|^2_{L^2_vH^{m_0}_x}\,dt\big)\notag\\
	&\le C_{\kappa,k}\varepsilon_0^2.\label{eq51}
\end{align}
Thus,
\begin{align}\label{eqq50}
	&\quad\,\sup_{0\le t\le T}\|t^{\kappa (m+k)}\<v\>^l\<D_v\>^k\<D_x\>^mf\|_{L^2_vH^{m_0}_x}^2 
	\notag\\
	&= \sup_{0\le t\le T}\big(t^{2\kappa m}\<D_x\>^{2m}\<v\>^{2l}f, t^{2\kappa k}\<v\>^{-2l}\<D_v\>^{k}\<v\>^{2l}\<D_v\>^{k}f\big)_{L^2_vH^{m_0}_x}\notag\\
	&\le C\big(\sup_{0\le t\le T}\|t^{2\kappa m}\<D_x\>^{2m}\<v\>^{2l}f\|_{L^2_vH^{m_0}_x}^2 +
	\sup_{0\le t\le T}\|t^{2\kappa k}f\|_{H^{2k}_vH^{m_0}_x}^2\big)\notag\\
	&\le C\big(\sup_{0\le t\le T}\|\<v\>^{4l}f\|_{L^2_vH^{m_0}_x}^2+\sup_{0\le t\le T}\|t^{4\kappa m}\<D_x\>^{4m}f\|_{L^2_vH^{m_0}_x}^2 +
	\sup_{0\le t\le T}\|t^{2\kappa k}f\|_{H^{2k}_vH^{m_0}_x}^2\big)\notag\\
	&\le C\varepsilon_0^2.
\end{align}

Noticing that the constants are independent of $\varepsilon$, by Banach-Alaoglu theorem, the solution $f=f_\varepsilon$ to equation \eqref{lineareqvarepsilon} weakly* converges to the weak solution $f$ to equation \eqref{lineareq} in the corresponding spaces as $\varepsilon\to 0$, which satisfies that for $m,k,l\in\N$, 
\begin{equation}\begin{split}
		\label{32}
		\sup_{0\le t\le T}\|t^{\kappa (m+k)}\<v\>^l\<D_v\>^k\<D_x\>^mf\|_{L^2_vH^{m_0}_x}^2 
		\le C\varepsilon_0^2,
	\end{split}
\end{equation}where the constant $C$ is independent of $T$. 
Now we $\varepsilon_0$ sufficiently small that the solution $f$ to \eqref{lineareq} satisfies
\begin{align}
	\sup_{0\le t\le T}\|t^{\kappa m}\<v\>^l\<D_v\>^k\<D_x\>^mf\|_{L^2_vH^{m_0}_x}^2 \le \varepsilon_1.
\end{align}This is the iteration assumption \eqref{assumong} and hence we can begin the iteration stated in section \ref{sec2}. 
Let $f^0 = 0$ and $f^{n+1}$ $(n\in\N)$ be the solution to 
\begin{align*}
	\partial_tf^{n+1} = Bf^{n+1} + \Gamma(f^n,f^{n+1}),\qquad f|_{t=0} = f_0.
\end{align*}
Then $f^{n}$ $(n\in\N)$ satisfies the iteration assumption \eqref{assumong}. Also the initial data satisfies \eqref{initial1}. Thus by iteration, the regularity estimate \eqref{32} gives that for $k,m,l,j\in\N$, 
\begin{equation}\begin{split}
		\sup_{0\le t\le T}\|t^{\kappa (m+k)}\<v\>^lf^{n+1}\|_{H^k_vH^{m+m_0}_x}^2
		\le C&\varepsilon_0^2,
	\end{split}
\end{equation}
Thus the approximation sequence $\{f^n(t)\}$ is bounded in corresponding spaces and hence by Banach-Alaoglu theorem, it has a weak* limit, which is exactly the solution $f$ to Boltzmann equation \eqref{Boltzmann} by the uniqueness of the solution to Boltzmann equation (cf. Theorem \ref{Thm1}). Also $f$ satisfies \eqref{32}.

But on the other hand, we have the uniform bound \eqref{globaletimate} that 
\begin{align*}
	\sup_{0\le t<\infty}\|f(t)\|_{L^2_vH^{m_0}_x}^2\le C\varepsilon^2_0. 
\end{align*}
This allows us to recover the regularity on any time interval $[T_1,T_2]$ $(T_2-T_1\le 1)$ (regarding $f(T_1)$ as the initial data and do the above calculation on $[T_1,T_2]$ instead of $[0,T]$), since the weak solution $f$ in Theorem \ref{Thm1} is unique and our analysis is independent of time $t$, i.e. the constant $C$ is independent of time $T\le1$.
So then we get the uniform bound on time $t$: for $\tau>0$, 
\begin{align*}
	\|\<v\>^l\<D_v\>^k\<D_x\>^mf\|^2_{L^\infty([\tau,\infty);L^2_vH^{m_0}_x)}\le C_\tau\varepsilon_0^2.
\end{align*}

By Sobolev embedding theorem, we have that the solution $f(t)$ to Boltzmann equation belongs to $C^\infty(\R^d_x;\S(\R^d_v))$ for $t\in (0,\infty)$. 
\end{proof}

\section{Appendix}
The function $a$ and $b$ can regenerate regularity on $v$ and $y$ respectively. That is for $K,J,k,l\ge 0$, we have 
\begin{equation}\label{eq09}\begin{split}
		a^{-K-J} &\le \<v\>^{-(\gamma+2s)K}\<v\>^{-\gamma J}\<\eta\>^{-2sJ}\\
		&\le \<v\>^{-l}\<\eta\>^{-k},\\
		b^{-K} &\le \<v\>^{-l}\<y\>^{-k},
	\end{split}
\end{equation}
for sufficiently large $K>>J>>1$. Notice that $\gamma$ may be negative in this paper, since $\gamma+2s>0$ is our only restriction. 

\paragraph{Pseudo-differential calculus}

We recall some notation and theorem of pseudo differential calculus. For details, one may refer to Chapter 2 in the book \cite{Lerner2010}, Proposition 1.1 in \cite{Bony1998-1999} and \cite{Beals1981,Bony1994} for details. Set $\Gamma=|dv|^2+|d\eta|^2$, but also note that the following are also valid for general admissible metric.
Let $M$ be an $\Gamma$-admissible weight function. That is, $M:\R^{2d}\to (0,+\infty)$ satisfies the following conditions:\\
(a). (slowly varying) there exists $\delta>0$ such that for any $X,Y\in\R^{2d}$, $|X-Y|\le \delta$ implies
\begin{align*}
	M(X)\approx M(Y);
\end{align*}
(b) (temperance) there exists $C>0$, $N\in\R$, such that for $X,Y\in \R^{2d}$,
\begin{align*}
	\frac{M(X)}{M(Y)}\le C\<X-Y\>^N.
\end{align*}
A direct result is that if $M_1,M_2$ are two $\Gamma$-admissible weight, then so is $M_1+M_2$ and $M_1M_2$. Consider symbols $a(v,\eta,\xi)$ as a function of $(v,\eta)$ with parameters $\xi$. We say that
$a\in S(\Gamma)=S(M,\Gamma)$ uniformly in $\xi$, if for $\alpha,\beta\in \N^d$, $v,\eta\in\Rd$,
\begin{align*}
	|\partial^\alpha_v\partial^\beta_\eta a(v,\eta,\xi)|\le C_{\alpha,\beta}M,
\end{align*}with $C_{\alpha,\beta}$ a constant depending only on $\alpha$ and $\beta$, but independent of $\xi$. The space $S(M,\Gamma)$ endowed with the seminorms
\begin{align*}
	\|a\|_{k;S(M,\Gamma)} = \max_{0\le|\alpha|+|\beta|\le k}\sup_{(v,\eta)\in\R^{2d}}
	|M(v,\eta)^{-1}\partial^\alpha_v\partial^\beta_\eta a(v,\eta,\xi)|,
\end{align*}becomes a Fr\'{e}chet space.
Sometimes we write $\partial_\eta a\in S(M,\Gamma)$ to mean that $\partial_{\eta_j} a\in S(M,\Gamma)$ $(1\le j\le d)$ equipped with the same seminorms.
We formally define the pseudo-differential operator by
\begin{align*}
	(op_ta)u(x)=\int_\Rd\int_\Rd e^{2\pi i (x-y)\cdot\xi}a((1-t)x+ty,\xi)u(y)\,dyd\xi,
\end{align*}for $t\in\R$, $f\in\S$.
In particular, denote $a(v,D_v)=op_0a$ to be the standard pseudo-differential operator and
$a^w(v,D_v)=op_{1/2}a$ to be the Weyl quantization of symbol $a$. We write $A\in Op(M,\Gamma)$ to represent that $A$ is a Weyl quantization with symbol belongs to class $S(M,\Gamma)$. One important property for Weyl quantization of a real-valued symbol is the self-adjoint on $L^2$ with domain $\S$. 

Let $a_1(v,\eta)\in S(M_1,\Gamma),a_2(v,\eta)\in S(M_2,\Gamma)$, then $a_1^wa_2^w=(a_1\#a_2)^w$, $a_1\#a_2\in S(M_1M_2,\Gamma)$ with
\begin{align*}
	a_1\#a_2(v,\eta)&=a_1(v,\eta)a_2(v,\eta)
	+\int^1_0(\partial_{\eta}a_1\#_\theta \partial_{v} a_2-\partial_{v} a_1\#_\theta \partial_{\eta} a_2)\,d\theta,\\
	g\#_\theta h(Y):&=\frac{2^{2d}}{\theta^{-2n}}\int_\Rd\int_\Rd e^{-\frac{4\pi i}{\theta}\sigma(X-Y_1)\cdot(X-Y_2)}(4\pi i)^{-1}\<\sigma\partial_{Y_1}, \partial_{Y_2}\>g(Y_1) h(Y_2)\,dY_1dY_2,
\end{align*}with $Y=(v,\eta)$, $\sigma=\begin{pmatrix}
	0&I\\-I&0
\end{pmatrix}$.
For any non-negative integer $k$, there exists $l,C$ independent of $\theta\in[0,1]$ such that
\begin{align}\label{sharp_theta}
	\|g\#_\theta h\|_{k;S(M_1M_2,\Gamma)}\le C\|g\|_{l,S(M_1,\Gamma)}\|h\|_{l,S(M_2,\Gamma)}.
\end{align}
% with
% \begin{align}
%   \|g\|_{k;S(M,\Gamma)}:=\max_{0\le|\alpha|+|\beta|\le k}\sup_{v,\eta\in\R^d}\left|M(v,\eta)^{-1}\partial^\alpha_v\partial^\beta_\eta g(v,\eta)\right|.
% \end{align}
Thus if $\partial_{\eta}a_1,\partial_{\eta}a_2\in S(M'_1,\Gamma)$ and $\partial_{v}a_1,\partial_{v}a_2\in S(M'_2,\Gamma)$, then $[a_1,a_2]\in S(M'_1M'_2,\Gamma)$, where $[\cdot,\cdot]$ is the commutator defined by $[A,B]:=AB-BA$.

For composition of pseudodifferential operator we have $a^wb^w= (a\#b)^w$ with 
\begin{align}\label{compostion}
	a\#b = ab + \frac{1}{4\pi i}\{a,b\} + \sum_{2\le k\le \nu}2^{-k}\sum_{|\alpha|+|\beta|=k}\frac{(-1)^{|\beta|}}{\alpha!\beta!}D^\alpha_\eta\partial^\beta_xaD^{\beta}_\eta\partial^\alpha_xb+r_\nu(a,b_),
\end{align}where $X=(v,\eta)$,
\begin{align*}
	r_\nu(a,b)(X) & = R_\nu(a(X)\otimes b(Y))|_{X=Y},\\
	R_\nu &= \int^1_0\frac{(1-\theta)^{\nu-1}}{(\nu-1)!}\exp\Big(\frac{\theta}{4\pi i}\<\sigma\partial_X,\partial_Y\Big)\,d\theta\Big(\frac{1}{4\pi i}\<\sigma\partial_X,\partial_Y\Big)^\nu.
\end{align*}

We can define a Hilbert space $H(M,\Gamma):=\{u\in\S':\|u\|_{H(M,\Gamma)}<\infty\}$, where
\begin{align}\label{sobolev_space}
	\|u\|_{H(M,\Gamma)}:=\int M(Y)^2\|\varphi^w_Yu\|^2_{L^2}|g_Y|^{1/2}\,dY<\infty,
\end{align}and $(\varphi_Y)_{Y\in\R^{2d}}$ is any uniformly confined family of symbols which is a partition of unity. If $a\in S(M)$ is a isomorphism from $H(M')$ to $H(M'M^{-1})$, then $(a^wu,a^wv)$ is an equivalent Hilbertian structure on $H(M)$. Moreover, the space $\S(\Rd)$ is dense in $H(M)$ and $H(1)=L^2$.

Let $a\in S(M,\Gamma)$, then
$a^w:H(M_1,\Gamma)\to H(M_1/M,\Gamma)$ is linear continuous, in the sense of unique bounded extension from $\S$ to $H(M_1,\Gamma)$.
Also the existence of $b\in S(M^{-1},\Gamma)$ such that $b\#a = a\#b = 1$ is equivalent to the invertibility of $a^w$ as an operator from $H(MM_1,\Gamma)$
onto $H(M_1,\Gamma)$ for some $\Gamma$-admissible weight function $M_1$.

For the metric $\Gamma=|dv|^2+|d\eta|^2$, the map $J^t=\exp(2\pi i D_v\cdot D_\eta)$ is an isomorphism of the Fr\'{e}chet space $S(M,\Gamma)$, with polynomial bounds in the real variable $t$, where $D_v=\partial_v/i$, $D_\eta=\partial_\eta/i$. Moreover, $a(x,D_v)=(J^{-1/2}a)^w$.

\paragraph{Carleman representation and cancellation lemma}

Now we have a short review of some useful facts in the theory of Boltzmann equation. One may refer to \cite{Alexandre2000,Global2019} for details. The first one is the so called Carleman representation. For measurable function $F(v,v_*,v',v'_*)$, if any sides of the following equation is well-defined, then
\begin{align}
	&\int_{\R^d}\int_{\mathbb{S}^{d-1}}b(\cos\theta)|v-v_*|^\gamma F(v,v_*,v',v'_*)\,d\sigma dv_*\notag\\
	&\quad=\int_{\R^d_h}\int_{E_{0,h}}\tilde{b}(\alpha,h)\1_{|\alpha|\ge|h|}\frac{|\alpha+h|^{\gamma+1+2s}}{|h|^{d+2s}}F(v,v+\alpha-h,v-h,v+\alpha)\,d\alpha dh,\label{Carleman}
\end{align}where $\tilde{b}(\alpha,h)$ is bounded from below and above by positive constants, and $\tilde{b}(\alpha,h)=\tilde{b}(|\alpha|,|h|)$, $E_{0,h}$ is the hyper-plane orthogonal to $h$ containing the origin. The second is the cancellation lemma. Consider a measurable function $G(|v-v_*|,|v-v'|)$, then for $f\in\S$,
\begin{align*}
	\int_{\R^d}\int_{\mathbb{S}^{d-1}}G(|v-v_*|,|v-v'|)b(\cos\theta)(f'_*-f_*)\,d\sigma dv_* = S*_{v_*}f(v),
\end{align*}where $S$ is defined by, for $z\in\R^d$,
\begin{align*}
	S(z)=2\pi \int^{\pi/2}_0 b(\cos\theta)\sin\theta\left(G(\frac{|z|}{\cos\theta/2},\frac{|z|\sin\theta/2}{\cos\theta/2}) - G(|z|,|z|\sin(\theta/2))\right)\,d\theta.
\end{align*}

\footnotesize
%\small
\bibliographystyle{plain}
\bibliography{1.bib}

\end{document}